\documentclass[11pt,a4paper,reqno]{amsart}
\usepackage[english]{babel}
\usepackage[applemac]{inputenc}
\usepackage[T1]{fontenc}
\usepackage{palatino}
\usepackage{cite}
\usepackage{verbatim}
\usepackage{amsmath}
\usepackage{amssymb}
\usepackage{amsthm}
\usepackage{amsfonts}
\usepackage{graphicx}
\usepackage{mathtools}
\usepackage{enumitem}

\usepackage[colorlinks = true, citecolor = black]{hyperref}
\pagestyle{headings}
\author{Tuomas Orponen}
\title{On the dimension and smoothness of radial projections}
\keywords{Hausdorff dimension, fractals, radial projections, visibility}
\address{University of Helsinki, Department of Mathematics and Statistics}
\subjclass[2010]{28A80 (Primary) 28A78 (Secondary)}
\thanks{T.O. is supported by the Academy of Finland via the project \emph{Quantitative rectifiability in Euclidean and non-Euclidean spaces}, grant number 309365. The research was also partially supported by travel grants from the \emph{V\"ais\"al\"a fund} and \emph{Mathematics fund} of the \emph{Finnish Academy of Science and Letters}.}
\email{tuomas.orponen@helsinki.fi}

\newcommand{\R}{\mathbb{R}}

\newcommand{\N}{\mathbb{N}}

\newcommand{\calT}{\mathcal{T}}

\newcommand{\calH}{\mathcal{H}}

\newcommand{\calS}{\mathcal{S}}

\newcommand{\spt}{\operatorname{spt}}
\newcommand{\Hd}{\dim_{\mathrm{H}}}

\newcommand{\1}{\mathbf{1}}

\newcommand{\diam}{\operatorname{diam}}
\newcommand{\card}{\operatorname{card}}
\newcommand{\dist}{\operatorname{dist}}

\newcommand{\dir}{\textup{dir}}

\newcommand{\calM}{\mathcal{M}}
\newcommand{\Inv}{\mathrm{Inv}}

\numberwithin{equation}{section}

\theoremstyle{plain}
\newtheorem{thm}[equation]{Theorem}

\newtheorem{conjecture}[equation]{Conjecture}
\newtheorem{lemma}[equation]{Lemma}

\newtheorem{ex}[equation]{Example}
\newtheorem{cor}[equation]{Corollary}

\theoremstyle{definition}

\theoremstyle{remark}
\newtheorem{remark}[equation]{Remark}

\addtolength{\hoffset}{-1.15cm}
\addtolength{\textwidth}{2.3cm}
\addtolength{\voffset}{0.45cm}
\addtolength{\textheight}{-0.9cm}

\newcommand{\nref}[1]{(\hyperref[#1]{#1})}

\begin{document}

\begin{abstract} This paper contains two results on the dimension and smoothness of radial projections of sets and measures in Euclidean spaces.

To introduce the first one, assume that $E,K \subset \R^{2}$ are non-empty Borel sets with $\Hd K > 0$. Does the radial projection of $K$ to some point in $E$ have positive dimension? Not necessarily: $E$ can be zero-dimensional, or $E$ and $K$ can lie on a common line. I prove that these are the only obstructions: if $\Hd E > 0$, and $E$ does not lie on a line, then there exists a point in $x \in E$ such that the radial projection $\pi_{x}(K)$ has Hausdorff dimension at least $(\Hd K)/2$. Applying the result with $E = K$ gives the following corollary: if $K \subset \R^{2}$ is Borel set, which does not lie on a line, then the set of directions spanned by $K$ has Hausdorff dimension at least $(\Hd K)/2$.

For the second result, let $d \geq 2$ and $d - 1 < s < d$. Let $\mu$ be a compactly supported Radon measure in $\R^{d}$ with finite $s$-energy. I prove that the radial projections of $\mu$ are absolutely continuous with respect to $\calH^{d - 1}$ for every centre in $\R^{d} \setminus \spt \mu$, outside an exceptional set of dimension at most $2(d - 1) - s$. In fact, for $x$ outside an exceptional set as above, the proof shows that $\pi_{x\sharp}\mu \in L^{p}(S^{d - 1})$ for some $p > 1$. The dimension bound on the exceptional set is sharp.
\end{abstract}

\maketitle

\tableofcontents

\section{Introduction} This paper studies visibility and radial projections. Given $x \in \R^{d}$, define the radial projection $\pi_{x} \colon \R^{d} \setminus \{x\} \to S^{d - 1}$ by
\begin{displaymath} \pi_{x}(y) = \frac{y - x}{|y - x|}. \end{displaymath}
A Borel set $K \subset \R^{2}$ will be called
\begin{itemize}
\item \emph{invisible from $x$}, if $\calH^{d - 1}(\pi_{x}(K \setminus \{x\})) = 0$, and
\item \emph{totally invisible from $x$}, if $\Hd \pi_{x}(K \setminus \{x\}) = 0$. 
\end{itemize}
Above, $\Hd$ and $\calH^{s}$ stand for Hausdorff dimension and $s$-dimensional Hausdorff measure, respectively. I will only consider Hausdorff dimension in this paper, as many of the results below would be much easier for box dimension. The study of (in-)visibility has a long tradition in geometric measure theory. For many more results and questions than I can introduce here, see Section 6 of Mattila's survey \cite{Ma}. The basic question is the following: given a Borel set $K \subset \R^{d}$, how large can the sets
\begin{displaymath} \Inv(K) = \{x \in \R^{d} : K \text{ is invisible from } x\} \end{displaymath}
and
\begin{displaymath} \Inv_{T}(K) := \{x \in \R^{d} : K \text{ is totally invisible from } x\} \end{displaymath}
be? Clearly $\Inv_{T}(K) \subset \Inv(K)$, and one generally expects $\Inv_{T}(K)$ to be significantly smaller than $\Inv(K)$. The existing results fall roughly into the following three categories:
\begin{itemize}
\item[(1)] What happens if $\Hd K > d - 1$? 
\item[(2)] What happens if $\Hd K \leq d - 1$?
\item[(3)] What happens if $0 < \calH^{d - 1}(K) < \infty$?
\end{itemize}

Cases (1) and (3) are the most classical, having already been studied (for $d = 2$) in the 1954 paper \cite{Mar} of Marstrand. Given $s > 1$, Marstrand proved that any Borel set $K \subset \R^{2}$ with $0 < \calH^{s}(K) < 1$ is visible (that is, not invisible) from Lebesgue almost every point $x \in \R^{2}$, and also from $\calH^{s}$ almost every point $x \in K$. Unifying Marstrand's results, and their generalisations to $\R^{d}$, the following sharp bound was recently established by Mattila and the author in \cite{MO} and \cite{O}:
\begin{equation}\label{MOIneq} \Hd \Inv(K) \leq 2(d - 1) - \Hd K, \end{equation}
for all Borel sets $K \subset \R^{d}$ with $d - 1 < \Hd K \leq d$. This paper contains a variant of the bound \eqref{MOIneq} for measures, see Section \ref{intro2}.

The visibility of sets $K$ in Case (3) depends on their rectifiability. I will restrict the discussion to the case $d = 2$ for now. It is easy to show that $1$-rectifiable sets, which are not $\calH^{1}$ almost surely covered by a single line, are visible from all points in $\R^{2}$, with possibly one exception, see \cite{OS}. On the other hand, if $K \subset \R^{2}$ is purely $1$-unrectifiable, then the sharp bound
\begin{displaymath} \Hd [\R^{2} \setminus \Inv(K)] = \Hd \{x \in \R^{2} : K \text{ is visible from } x\} \leq 1. \end{displaymath}
was obtained by Marstrand, building on Besicovitch's projection theorem. For generalisations, improvements and constructions related to the bound above, see \cite[Theorem 5.1]{Ma2}, and \cite{Cs1,Cs2}. Marstrand raised the question -- which remains open to the best of my knowledge -- whether it is possible that $\calH^{1}(\R^{2} \setminus \Inv(K)) > 0$: in particular, can a purely $1$-unrectifiable set be visible from a positive fraction of its own points? For purely $1$-unrectifiable self-similar sets $K \subset \R^{2}$ one has $\Inv(K) = \R^{2}$, as shown by Simon and Solomyak \cite{SS}.

\subsection{The first main result} Case (2) has received less attention. To simplify the discussion, assume that $\Hd K = 1$ and $\calH^{1}(K) = 0$, so that $\Inv(K) = \R^{2}$, and the relevant question becomes the size of $\Inv_{T}(K)$. The radial projections $\pi_{p}$ fit the influential \emph{generalised projections} framework of Peres and Schlag \cite{PS}. If $K \subset \R^{2}$ is a Borel set with arbitrary dimension $s \in [0,2]$, then it follows from \cite[Theorem 7.3]{PS} that
\begin{equation}\label{PSIneq} \Hd \Inv_{T}(K) \leq 2 - s. \end{equation}
When $s > 1$, the bound \eqref{PSIneq} is a weaker version of \eqref{MOIneq}, but the benefit of \eqref{PSIneq} is that it holds without any restrictions on $s$. In particular, if $s = 1$, one obtains
\begin{equation}\label{form41} \Hd \Inv_{T}(K) \leq 1. \end{equation}
This bound is sharp for a trivial reason: consider the case, where $K$ lies on a single line $\ell \subset \R^{2}$. Then, $\Inv_{T}(K) = \ell$. The starting point for this paper was the question: are there essentially different examples manifesting the sharpness of \eqref{form41}? The answer turns out to be negative in a very strong sense. Here are the first main results of the paper:
\begin{thm}[Weak version]\label{mainConceptual} Assume that $K \subset \R^{2}$ is a Borel set with $\Hd K > 0$. Then, at least one of the following holds:
\begin{itemize}
\item $\Hd \Inv_{T}(K) = 0$.
\item $\Inv_{T}(K)$ is contained on a line.
\end{itemize}
\end{thm}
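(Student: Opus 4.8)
The plan is to deduce this from the quantitative first main result announced in the abstract, so I would first record that (short) reduction and then sketch a proof of the quantitative statement, in which all the difficulty lies. For the reduction: suppose towards a contradiction that $E := \Inv_{T}(K)$ is not contained in any line and that $\Hd E > 0$. Since $\Hd K > 0$, $\Hd E > 0$, and $E$ is not on a line, the quantitative theorem applied to the pair $(E,K)$ produces a point $x \in E$ with $\Hd \pi_{x}(K \setminus \{x\}) \geq (\Hd K)/2 > 0$, which contradicts $x \in \Inv_{T}(K)$. Hence it suffices to prove the quantitative statement, and I would attack it as follows.

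Fix $0 < s < \Hd K$ and a Frostman probability measure $\mu$ on $K$ with $\mu(B(z,r)) \lesssim r^{s}$; since $s > 0$ we have $\mu(\{x\}) = 0$ for every $x$, so $\pi_{x\sharp}\mu$ is a well-defined measure giving full mass to $\pi_{x}(K \setminus \{x\})$, and by the standard relation between Riesz energies and Hausdorff dimension it is enough to find one $x \in E$ with $I_{t}(\pi_{x\sharp}\mu) < \infty$ for some $t$ as close to $s/2$ as we like. Choosing a small $\sigma > 0$ and a Frostman measure $\nu$ on $E$, it then suffices to show that
\[ \int I_{t}(\pi_{x\sharp}\mu) \, d\nu(x) < \infty \]
for such a $t$, since then $I_{t}(\pi_{x\sharp}\mu) < \infty$ for $\nu$-a.e.\ $x \in \spt \nu \subset E$.

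Using that twice the area of the triangle with vertices $x,y,z$ equals $|y - z| \cdot \dist(x,\ell_{y,z})$, where $\ell_{y,z}$ is the line through $y$ and $z$, together with $|\pi_{x}(y) - \pi_{x}(z)| \geq |\sin \angle(y - x, z - x)|$, one obtains
\[ |\pi_{x}(y) - \pi_{x}(z)| \geq \frac{|y - z| \cdot \dist(x,\ell_{y,z})}{|x - y| \cdot |x - z|}. \]
Since all supports are compact the denominator is $\lesssim 1$, so by Fubini
\[ \int I_{t}(\pi_{x\sharp}\mu) \, d\nu(x) \lesssim \iint \frac{1}{|y - z|^{t}} \left( \int \frac{d\nu(x)}{\dist(x, \ell_{y,z})^{t}} \right) d\mu(y) \, d\mu(z). \]
If one had a uniform ``strip--Frostman'' bound $\nu(\{x : \dist(x, \ell) \leq r\}) \lesssim r^{\gamma}$, valid for all lines $\ell$ with a fixed $\gamma > t$, the inner integral would be $O(1)$ and the remaining double integral would equal $I_{t}(\mu) < \infty$ for every $t < s$. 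The exponent $s/2$, rather than $s$, is what I would expect to drop out of optimising this estimate once the strip bound is only available with a weaker exponent.

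The main obstacle is precisely obtaining such a strip estimate: a measure with $\nu(B(z,r)) \lesssim r^{\sigma}$ may put a definite fraction of its mass in an $r$-neighbourhood of a single line at every scale, so no uniform bound holds in general, and the only input against this is that $E$ is not contained in a line. I would handle it by a dichotomy: either $\nu$, after restricting to a piece of positive measure, already obeys a strip bound with some exponent $\gamma$, and one runs the estimate above; or there is a line $\ell^{*}$ near which $\nu$ --- and, in the worst case, also $\mu$ --- concentrates, in which case one uses that $E \not\subset \ell^{*}$ to move the prospective centre into $E \setminus \ell^{*}$ and run the argument there, where the geometry is again favourable. Making this split quantitative and uniform across scales is the crux; if the clean energy integral proves too rigid, the same plan can be executed in a $\delta$-discretised form, with $I_{t}$ replaced by $\delta$-covering numbers and the strip bound by a counting estimate for how many $\delta$-tubes through a point can carry substantial $\mu$-mass. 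The remaining ingredients --- the choice of Frostman measures, the elementary angle bound, and the passage from finite energy back to a dimension lower bound --- are routine.
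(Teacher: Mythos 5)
Your reduction of the weak statement to the quantitative one is correct and is exactly how the paper proceeds: Theorem \ref{mainConceptual} is an immediate consequence of Theorem \ref{main}, and your two-line contradiction argument is the intended one. The problem is that essentially all of the content of the result lies in the quantitative statement, and your argument for it stops precisely at the point where the real difficulty begins. You reduce matters to the bound $\int I_{t}(\pi_{x\sharp}\mu)\,d\nu(x) < \infty$, observe correctly that this would follow from a uniform strip estimate $\nu(\{x : \dist(x,\ell) \leq r\}) \lesssim r^{\gamma}$ with $\gamma > t$, and then acknowledge that no such estimate holds (a Frostman measure $\nu$ on a set $E$ consisting of a narrow tube plus one extra point concentrates all of its mass near a single line at every scale). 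The proposed remedy --- a dichotomy between ``$\nu$ obeys a strip bound on a positive-measure piece'' and ``$\nu$ concentrates near a line $\ell^{*}$, so move the centre to $E \setminus \ell^{*}$'' --- is not carried out, and a single application of it cannot work: after moving to $E \setminus \ell^{*}$ the remaining set may again concentrate near a different line at a finer scale, and the ``Venetian blind'' example discussed around Figure \ref{fig4} of the paper shows that both $E$ and $K$ can be arranged to defeat any finite number of such splittings. You say yourself that making the split quantitative and uniform across scales is the crux; that crux is the theorem.

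For comparison, the paper abandons the energy integral entirely and runs a multi-scale induction on the scales $\delta_{k} = 2^{-(1+\epsilon)^{k}}$ (Lemma \ref{mainLemma}), constructing nested compacta $K_{k} \subset K$ and $E_{k} \subset E$ so that no point of $E_{k}$ admits $\leq \delta_{k}^{-\tau}$ tubes of width $\delta_{k}$ capturing more than $\delta_{k}^{\eta}$ of the $\mu$-mass of $K_{k}$. The step from scale $k$ to $k+1$ requires showing that the set of ``bad'' viewpoints is covered by a controlled number of tubes of the much larger width $\delta^{\rho}$ (the ``flower'' structure of Lemma \ref{auxLemma3}), and then either discarding the bad set from $E_{k}$, or --- in the concentration case you gesture at --- passing to $E_{k+1}$ inside a single such tube $T_{0}$ and deleting a neighbourhood $T(\ell_{0},\delta^{\eta/2})$ of the corresponding line from $K_{k}$. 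The mass lost in that deletion is controlled not by $\mu(\ell_{0}) = 0$ directly but by the induction hypothesis at the coarser scale $k - \Gamma$ (where $\delta^{\eta/2} = \delta_{k-\Gamma}$), which is what makes the dichotomy uniform over infinitely many scales and yields a summable total loss $\sum (1/4)^{j}$. None of this bookkeeping, nor a workable substitute for it, appears in your proposal, so as it stands the argument has a genuine gap rather than a routine omission.
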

In fact, more is true. For $K \subset \R^{2}$, define
\begin{displaymath} \Inv_{1/2}(K) := \left\{x \in \R^{2} : \Hd \pi_{x}(K \setminus \{x\}) < \tfrac{\Hd K}{2}\right\}. \end{displaymath}
Then, if $\Hd K > 0$, one evidently has $\Inv_{T}(K) \subset \Inv_{1/2}(K) \subset \Inv(K)$.
\begin{thm}[Strong version]\label{main} Theorem \ref{mainConceptual} holds with $\Inv_{T}(K)$ replaced by $\Inv_{1/2}(K)$. That is, if $E \subset \R^{2}$ is a Borel set with $\Hd E > 0$, not contained on a line, then there exists $x \in E$ such that $\Hd \pi_{x}(K \setminus \{x\}) \geq (\Hd K)/2$. \end{thm}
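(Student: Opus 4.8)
The plan is to run an $L^{2}$/$s$-energy argument by contradiction, after two quick reductions. We may assume $s:=\Hd K>0$ (otherwise the conclusion is vacuous), and also that $K$ is not contained in a line: if $K\subset\ell$ for a line $\ell$, then since $E\not\subset\ell$ we may pick $x\in E\setminus\ell$, and $\pi_{x}|_{\ell}$ is locally bi-Lipschitz, so $\Hd\pi_{x}(K)=\Hd K\geq s/2$. Now suppose for contradiction that $E$ is a counterexample: $\Hd E>0$, $E$ lies on no line, yet $\Hd\pi_{x}(K\setminus\{x\})<s/2$ for every $x\in E$. Fix $\beta<s/2$, choose a compact $K_{0}\subset K$ carrying a Frostman measure $\mu$ of exponent $t\in(2\beta,s)$, and a Frostman probability measure $\nu$ on a compact subset of $E$ of some exponent $\sigma'\in(0,\Hd E)$, chosen so that $\nu(K_{0})<1$. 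Since $\pi_{x}$ maps compacta to compacta, it suffices to exhibit a single $x\in\spt\nu\setminus K_{0}$ with $I_{\beta}(\pi_{x\sharp}\mu)<\infty$: this gives $\Hd\pi_{x}(K\setminus\{x\})\geq\beta$, and applying it along a sequence $\beta\nearrow s/2$ (discarding a $\nu$-null set each time) contradicts the standing assumption. So the target reduces to
\begin{displaymath}
\int I_{\beta}(\pi_{x\sharp}\mu)\,d\nu(x)=\iiint\frac{d\mu(y)\,d\mu(z)\,d\nu(x)}{|\pi_{x}(y)-\pi_{x}(z)|^{\beta}}<\infty,
\end{displaymath}
since finiteness forces $I_{\beta}(\pi_{x\sharp}\mu)<\infty$ for $\nu$-a.e.\ $x$.

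The geometric input is the elementary estimate $|\pi_{x}(y)-\pi_{x}(z)|\asymp\frac{|y-z|\,\dist(x,\ell_{y,z})}{|x-y|\,|x-z|}$, where $\ell_{y,z}$ is the line through $y$ and $z$; this is valid when $\angle yxz$ stays away from $\pi$, and the complementary range contributes only a harmless $O(1)$ because there the left side is bounded below. Since all measures are compactly supported, $|x-y|,|x-z|\lesssim1$, so integrating in $x$ first gives
\begin{displaymath}
\int I_{\beta}(\pi_{x\sharp}\mu)\,d\nu(x)\lesssim\iint\frac{W_{\beta}(\ell_{y,z})}{|y-z|^{\beta}}\,d\mu(y)\,d\mu(z)+1,\qquad W_{\beta}(\ell):=\int\frac{d\nu(x)}{\dist(x,\ell)^{\beta}}.
\end{displaymath}
This already handles the case where $E$ is ``genuinely two-dimensional'' near $\spt\nu$: if $\nu(\{x:\dist(x,\ell)\leq r\})\lesssim r^{\theta}$ uniformly over lines $\ell$ for some $\theta>\beta$ --- which holds e.g.\ whenever $\sigma'>1+\beta$ --- then a dyadic decomposition yields $\sup_{\ell}W_{\beta}(\ell)<\infty$, and the right-hand side is bounded by $\sup_{\ell}W_{\beta}(\ell)\cdot I_{\beta}(\mu)<\infty$.

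The heart of the matter --- and the source of the factor $1/2$ --- is that a general positive-dimensional $E$ lying on no line may still carry a Frostman measure $\nu$ with most of its mass inside a thin tube around one line $\ell_{0}$, making $W_{\beta}(\ell_{0})=\infty$. I expect this to be the main obstacle, and the place where the non-linearity hypothesis on $E$ (together with the reduction that $K$ lies on no line) must enter quantitatively. The plan is to split the $(y,z)$-integral according to whether $\ell_{y,z}$ is \emph{good} ($W_{\beta}(\ell_{y,z})\leq M$) or \emph{bad}: the good part is $\leq M\,I_{\beta}(\mu)<\infty$, so everything reduces to controlling the $\mu\times\mu$-mass of the bad pairs. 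The bad lines are precisely the concentration directions of $\nu$, which for a $\sigma'$-Frostman measure form a severely restricted (Furstenberg/Kakeya-type) family, and the map $(y,z)\mapsto\ell_{y,z}$ cannot push much $\mu\times\mu$-mass onto such a family unless $K$ also clusters near those lines --- i.e.\ unless $E$ and $K$ essentially share a line, the case just excluded. At a single scale $\delta$ this should amount to a bound of the shape $(\mu\times\mu)(\{(y,z):\dist(x,\ell_{y,z})\leq\delta\})\lesssim\delta^{s/2}$ for $x$ outside a small exceptional set; the only proof I see of this goes through Cauchy--Schwarz --- comparing the $\delta$-covering number of $\pi_{x}(K)$ with the total mass of $\mu$ --- and loses exactly one square root, which is what pins the exponent at $s/2$ rather than $s$. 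I would anticipate that making the bad-line step work requires a bootstrapping argument: feeding the Peres--Schlag bound \eqref{PSIneq} for the auxiliary measure $\nu$ (or \eqref{MOIneq} when $s>1$) back into the estimate and iterating finitely often to absorb the part of $\nu$ near lines. The remaining ingredients --- the measure-theoretic reductions, the dyadic bookkeeping for $W_{\beta}$, and the passage from single-scale counting to Hausdorff dimension --- are routine and I would defer them.
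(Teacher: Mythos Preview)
Your outline identifies the right obstacle but does not overcome it, and the step you defer is in fact the entire content of the theorem. The energy approach aims for $I_{\beta}(\pi_{x\sharp}\mu)<\infty$ for $\nu$-almost every $x$; but nothing prevents your Frostman measure $\nu$ from being supported on a single line $\ell_{0}$ (take $E$ to be a Cantor set on $\ell_{0}$ together with one off-line point), and then the hypothesis ``$E$ not on a line'' is invisible to the integral. In that regime your proposed fixes do not bite: the ``Furstenberg/Kakeya-type'' restriction on bad lines is only meaningful when $\sigma'>1$, whereas here $\sigma'$ may be any small positive number and a single tube can carry all of $\nu$; the Peres--Schlag bound \eqref{PSIneq} concerns exceptional \emph{centres} for projections of $K$ and says nothing about the tube-concentration of $\nu$; and the single-scale bound $(\mu\times\mu)(\{(y,z):\dist(x,\ell_{y,z})\leq\delta\})\lesssim\delta^{s/2}$ for generic $x$ is essentially a restatement of the conclusion. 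More structurally, the theorem only promises \emph{one} good point $x\in E$, and there is no reason to expect the stronger $\nu$-a.e.\ statement your method would deliver --- the good $x$ may well lie in a $\nu$-null set, in which case $\int I_{\beta}(\pi_{x\sharp}\mu)\,d\nu=\infty$ and the argument cannot close.

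The paper's proof is structurally different and contains no energy integrals. It runs a multi-scale induction (Lemma~\ref{mainLemma}) that refines \emph{both} a subset $K_{k}\subset K$ and a subset $E_{k}\subset E$ simultaneously. At scale $\delta_{k}$ one shows that the ``bad'' points of $E_{k}$ (those from which $K_{k}$ can be heavily covered by $\delta_{k}^{-\tau}$ tubes) are themselves covered, within each direction sector, by few $\delta_{k}^{\rho}$-tubes; then either the bad set is $\nu$-small and is discarded from $E_{k}$, or it concentrates in one tube $T_{0}$, in which case one sets $E_{k+1}\subset E_{k}\cap T_{0}$ \emph{and} deletes a slightly wider parallel tube from $K_{k}$. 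The non-collinearity of $E$ enters only once, to force $\mu(\ell)=0$ for every line $\ell$ (via the bi-Lipschitz observation you also made), and this is precisely what makes the successive deletions from $K_{k}$ cost a summable amount of $\mu$-mass. The factor $1/2$ emerges from a covering count --- a $\delta$-tube of $\mu$-mass $\gtrsim\delta^{\tau}$ requires $\gtrsim\delta^{\tau-\kappa_{\mu}}$ transversal $\delta$-tubes to cover it, forcing $2\tau<\kappa_{\mu}$ --- which is morally the Cauchy--Schwarz loss you anticipated, but the surrounding architecture, the two-sided refinement of $E$ and $K$ across scales, is exactly what your proposal is missing.
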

\begin{remark} A closely related result is Theorem 1.6 in the paper \cite{BLZ} of Bond, \L aba and Zahl; with some imagination, Theorem 1.6(a) in \cite{BLZ} can be viewed as a "single scale" variant of Theorem \ref{main}, although at this scale, Theorem 1.6(a) contains more information than Theorem \ref{main}. As far as I can tell, proving the Hausdorff dimension statement in this context presents a substantial extra challenge, so Theorem \ref{main} is not easily implied by the results in \cite{BLZ}.
\end{remark}
\begin{ex} Figure \ref{fig4} depicts the main challenge in the proofs of Theorems \ref{mainConceptual} and \ref{main}. 
\begin{figure}[h!]
\begin{center}
\includegraphics[scale = 0.6]{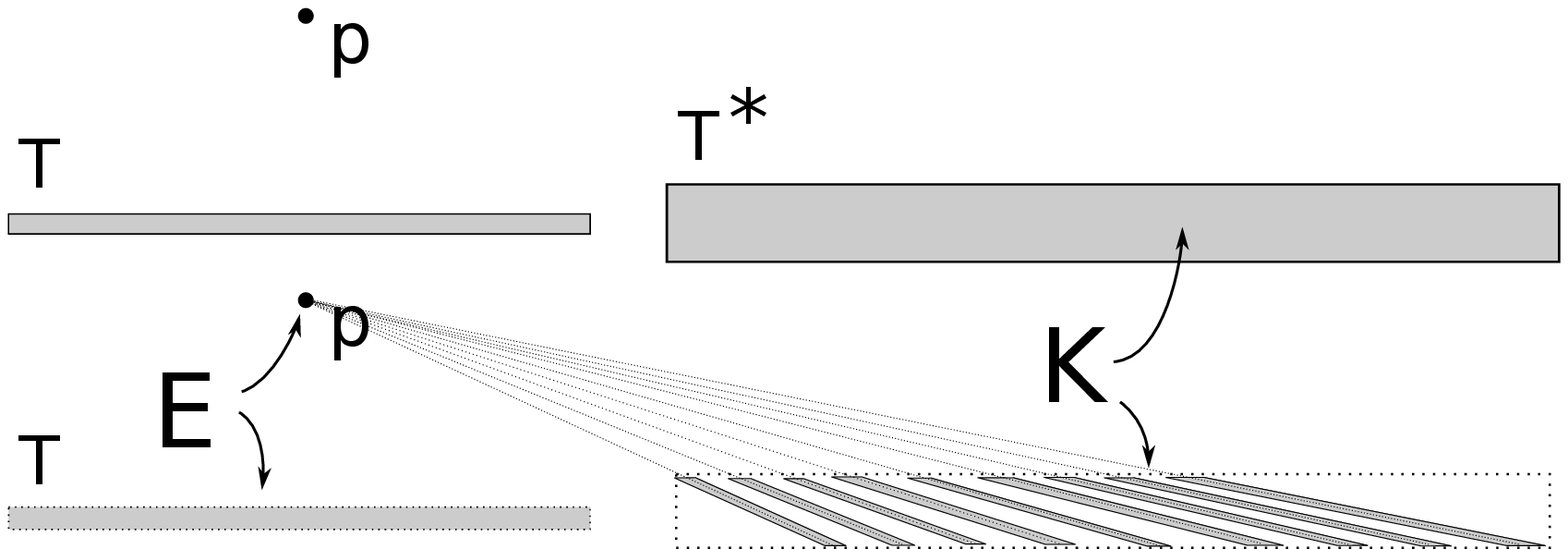}
\caption{What is the next step in the construction of $E$?}\label{fig4}
\end{center}
\end{figure}
The set $E$ has $\Hd E > 0$, and consists of something inside a narrow tube $T$, plus a point $x \notin T$. Then, Theorem \ref{mainConceptual} states that $E \not\subset \Inv_{T}(K)$ for any compact set $K \subset \R^{2}$ with $\Hd K > 0$. So, in order to find a counterexample to Theorem \ref{main}, all one needs to do is find $K$ by a standard "Venetian blind" construction, in such a way that $\Hd K > 0$ and $\Hd \pi_{y}(K) = 0$ for all $y \in E$. The first steps are obvious: to begin with, require that $K \subset T^{\ast}$ for another narrow tube parallel to $T$, see Figure \ref{fig4}. Then $\pi_{y}(K)$ is small for all $y \in T$. To handle the special point $x \in E$, split the contents of $T^{\ast}$ into a finite collection of new narrow tubes in such a way that $\pi_{x}(K)$ is small. In this manner, $\pi_{y}(K)$ can be made arbitrarily small for all $y \in E$ (in the sense of $\epsilon$-dimensional Hausdorff content, for instance, for any prescribed $\epsilon > 0$). It is quite instructive to think, why the construction cannot be completed: why cannot the "Venetian blinds" be iterated further (for both $E$ and $K$) so that, at the limit, $\Hd \pi_{y}(K) = 0$ for all $x \in E$?
\end{ex}

Theorem \ref{main} has the following immediate consequence:
\begin{cor}[Corollary to Theorem \ref{main}]\label{mainCor} Assume that $K \subset \R^{2}$ is a Borel set, not contained on a line. Then the set of unit vectors spanned by $K$, namely
\begin{displaymath} S(K) := \left\{\tfrac{x - y}{|x - y|} \in S^{1} : x,y \in K \text{ and } x \neq y\right\}, \end{displaymath}
satisfies $\Hd S(K) \geq \tfrac{\Hd K}{2}$.
\end{cor}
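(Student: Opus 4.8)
The plan is to deduce the statement directly from the Strong version, Theorem \ref{main}, by taking $E = K$. First, if $\Hd K = 0$ the asserted inequality $\Hd S(K) \geq (\Hd K)/2 = 0$ is trivial, so we may assume $\Hd K > 0$. Since $K$ is, by hypothesis, a Borel set not contained on a line, Theorem \ref{main} applies with $E = K$ and yields a point $x \in K$ with
\begin{displaymath} \Hd \pi_{x}(K \setminus \{x\}) \geq \tfrac{\Hd K}{2}. \end{displaymath}
The second and final step is the observation that, because $x \in K$, every unit vector $\pi_{x}(y) = (y - x)/|y - x|$ with $y \in K \setminus \{x\}$ is of the form appearing in the definition of $S(K)$ (take the pair $(y,x) \in K \times K$ with $y \neq x$). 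Hence $\pi_{x}(K \setminus \{x\}) \subseteq S(K)$, and monotonicity of Hausdorff dimension gives $\Hd S(K) \geq \Hd \pi_{x}(K \setminus \{x\}) \geq (\Hd K)/2$, as claimed.

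The only point deserving a brief remark is measurability: $S(K)$ is an analytic subset of $S^{1}$, being the image of the Borel set $\{(x,y) \in K \times K : x \neq y\}$ under the continuous map $(x,y) \mapsto (x - y)/|x - y|$, so $\Hd S(K)$ is well defined and the monotonicity step is legitimate (the analogous remark applies to the set $\pi_{x}(K \setminus \{x\})$ occurring in Theorem \ref{main}). Beyond this, there is no genuine obstacle: the whole difficulty is absorbed into Theorem \ref{main}, and the reduction above is purely formal.
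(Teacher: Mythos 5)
Your proposal is correct and coincides with the paper's own argument: apply Theorem \ref{main} with $E = K$ to obtain a point $x \in K$ with $\Hd \pi_{x}(K \setminus \{x\}) \geq (\Hd K)/2$, and observe that $\pi_{x}(K \setminus \{x\}) \subset S(K)$. The added remark on measurability is harmless but not needed, since Hausdorff dimension is monotone for arbitrary subsets of $S^{1}$.
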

\begin{proof} If $\Hd K = 0$, there is nothing to prove. Otherwise, Theorem \ref{main} implies that $K \not\subset \Inv_{1/2}(K)$, whence $\Hd S(K) \geq \Hd \pi_{x}(K \setminus \{x\}) \geq (\Hd K)/2$ for some $x \in K$.
\end{proof}

Corollary \ref{mainCor} is probably not sharp, and the following conjecture seems plausible:
\begin{conjecture}\label{mainConj} Assume that $K \subset \R^{2}$ is a Borel set, not contained on a line. Then $\Hd S(K) = \min\{\Hd K,1\}$.
\end{conjecture}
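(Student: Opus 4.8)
A caveat first: the \emph{equality} cannot hold in general, only the lower bound $\Hd S(K) \geq \min\{\Hd K, 1\}$. If $C \subset \R$ is the middle-thirds Cantor set and $K = (\{0\} \times C) \cup (\{1\} \times C) \subset \R^{2}$, then $K$ is not contained on a line and $\Hd K = \Hd C = \log 2/\log 3 < 1$, yet $S(K) \supset \{(1,t)/\sqrt{1 + t^{2}} : t \in C - C\}$ and $C - C = [-1,1]$, so $\Hd S(K) = 1 > \Hd K$. The lower bound is, on the other hand, sharp -- equality holds when $K$ is a dimension-$s$ subset of a line together with one point off that line -- and is what I would aim to prove. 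Since $\pi_{x}(K \setminus \{x\}) \subset S(K)$ for every $x \in K$, it suffices, as in the proof of Corollary~\ref{mainCor} but with the sharp exponent, to find one point $x \in K$ with $\Hd \pi_{x}(K \setminus \{x\}) \geq \min\{\Hd K, 1\}$. I would treat three regimes according to $\Hd K$.

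\emph{The supercritical regime $\Hd K > 1$.} Here the bound follows from the second main result of this paper. Choose disjoint compact sets $K_{1}, K_{2} \subset K$ with $\min\{\Hd K_{1}, \Hd K_{2}\} > 1$ (possible since $\mathcal{H}^{t}(K) > 0$ for some $t > 1$ and $\mathcal{H}^{t}$ is non-atomic), and a compactly supported Radon measure $\mu$ with $\spt \mu \subseteq K_{1}$ and finite $s'$-energy for some $1 < s' < \Hd K_{1}$. The set of centres $x \in \R^{2} \setminus \spt \mu$ for which $\pi_{x\sharp}\mu \notin L^{p}(S^{1})$ for all $p > 1$ has dimension at most $2 - s' < 1 < \Hd K_{2}$, so there is $x \in K_{2} \subset K$ with $\pi_{x\sharp}\mu \in L^{p}(S^{1})$ for some $p > 1$; a non-negative $L^{p}$-density with $p > 1$ and positive integral has support of positive $\mathcal{H}^{1}$-measure, so $\Hd \pi_{x}(K \setminus \{x\}) \geq \Hd \spt(\pi_{x\sharp}\mu) = 1$.

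\emph{The subcritical regime $\Hd K < 1$ and the critical case $\Hd K = 1$.} This is the main new content, and here the hypothesis that $K$ is not on a line is essential. The bilinear ($L^{2}$) method behind Theorem~\ref{main} yields only the exponent $(\Hd K)/2$; the factor $2$ is the square-root loss of a Cauchy--Schwarz step, and removing it is the whole problem. The most promising route is an $L^{p}$-improving radial projection estimate below the critical dimension -- morally a \emph{thin tubes} statement: if $\mu$ has finite $s$-energy and no $\delta$-tube carries more than $\delta^{s - \epsilon}$ of its mass, then for $x$ outside a set of dimension $\lesssim \epsilon$ the measure $\pi_{x\sharp}\mu$ is spread over $\gtrsim \delta^{-s + \epsilon}$ arcs of length $\delta$ -- which would plug into the reduction above exactly as in the supercritical regime. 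Failing that, one could attempt a multiscale dimension-increment argument: assuming $\Hd S(K) = t < \Hd K$, combine a Frostman measure on $K$ with the thinness $\mathcal{H}^{t'}_{\infty}(S(K)) = 0$ (any $t < t' < \Hd K$) to locate, at many scales and on large pieces of $K$, configurations in which $\pi_{x}$ behaves like a deficient projection simultaneously for \emph{all} centres $x$ in the piece, and then argue that this cannot iterate along a tree -- the "Venetian blind" obstruction from the example after Theorem~\ref{main}. The endpoint $\Hd K = 1$ is the most delicate: the exceptional-set exponent $2 - s'$ from the supercritical argument degenerates to $1$ as $s' \downarrow 1$, so no room is left, and one needs a genuinely critical input in which the "not on a line" hypothesis compensates for the missing energy.

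\emph{Main obstacle.} In one phrase: eliminating the square-root loss at and below the critical dimension $d - 1 = 1$. Above it, the $L^{p}$-statement already proved in this paper suffices; at and below it, upgrading the constant from $1/2$ to $1$ is precisely the open part, with the endpoint $\Hd K = 1$ the hardest point. Recent multiscale technology for Furstenberg-type problems seems the right toolbox for the subcritical case.
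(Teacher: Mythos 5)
The statement you were asked about is labelled a \emph{conjecture} in the paper, and the paper offers no proof of it: the surrounding text only records that the case $\Hd K > 1$ follows from Marstrand's classical visibility result, and that the case $\Hd K \leq 1$ is open and tied to continuous sum--product problems. Your proposal reaches exactly the same assessment, so there is no genuine gap relative to the paper --- but be clear that what you have written is not a proof of the statement; the subcritical and critical regimes remain entirely open, and your ``thin tubes'' and dimension-increment suggestions are programmes, not arguments. Your supercritical argument is correct and complete: splitting $K$ into disjoint compact pieces $K_{1},K_{2}$ of dimension $>1$, putting a finite-$s'$-energy measure on $K_{1}$, and invoking Theorem \ref{mainRadial} (exceptional set of dimension $\leq 2-s' < \Hd K_{2}$) to find $x \in K_{2}$ with $\mathcal{H}^{1}(\pi_{x}(K_{1})) > 0$ is a legitimate alternative to the route via Marstrand that the paper gestures at; note that in this regime the ``not on a line'' hypothesis is automatic.

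Your preliminary caveat is a genuine and correct observation that the paper does not make: as literally written, with equality, the conjecture is false, and your example of two copies of the middle-thirds Cantor set $C$ on parallel lines works, since $C - C = [-1,1]$ forces $\Hd S(K) = 1 > \log 2/\log 3 = \Hd K$. The intended (and plausible) statement is the lower bound $\Hd S(K) \geq \min\{\Hd K, 1\}$, which is how the conjecture should be read given that it is presented as a sharpening of the inequality in Corollary \ref{mainCor}. In short: your reading of what is known, what is open, and why the exponent $1/2$ in Corollary \ref{mainCor} is the obstruction (the Cauchy--Schwarz loss in the bilinear argument behind Theorem \ref{main}) all agree with the paper; just do not present the subcritical discussion as if it closed the problem.
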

This follows from Marstrand's result, discussed in Case (1) above, when $\Hd K > 1$. For $\Hd K \leq 1$, Conjecture \ref{mainConj} is closely connected with continuous sum-product problems, which means that significant improvements over Corollary \ref{mainCor} will, most likely, require new technology. It would, however, be interesting to know if an $\epsilon$-improvement over Corollary \ref{mainCor} is possible, combining the proof below with ideas from the paper \cite{KT} of Katz and Tao, and using the discretised sum-product theorem of Bourgain \cite{Bo}.

I have the referee to thank for pointing out that a natural discrete variant of Conjecture \ref{mainConj} has been solved by P. Ungar \cite{U} as early as 1982: a set of $n \geq 3$ points in the plane, not all on a single line, determine at least $n - 1$ distinct directions. 

\subsection{The second main result}\label{intro2} The second main result is a version of the estimate \eqref{MOIneq} for measures. Fix $d \geq 2$, and denote the space of compactly supported Radon measures on $\R^{d}$ is denoted by $\calM(\R^{d})$. For $\mu \in \calM(\R^{d})$, write
\begin{displaymath} \calS(\mu) := \{x \in \R^{d} \setminus \spt \mu : \pi_{x\sharp}\mu \text{ is not absolutely continuous w.r.t. } \calH^{d - 1}|_{S^{d - 1}}\}. \end{displaymath}
Note that whenever $x \in \R^{d} \setminus \spt \mu$, the projection $\pi_{x}$ is continuous on $\spt \mu$, and $\pi_{x\sharp}\mu$ is well-defined. One can check that the family of projections $\{\pi_{x}\}_{x \in \R^{d} \setminus \spt \mu}$ fits in the \emph{generalised projections} framework of Peres and Schlag \cite{PS}, and indeed Theorem 7.3 in \cite{PS} yields
\begin{equation}\label{PSIneqMeasures} \Hd \calS(\mu) \leq 2d - 1 - s, \end{equation}
whenever $d - 1 < s < d$ and $\mu \in \calM(\R^{d})$ has finite $s$-energy (see \eqref{sEnergy} for a definition). Combining this bound with standard arguments shows that if $K \subset \R^{d}$ is a Borel set with $d - 1 < \Hd K \leq d$, then 
\begin{displaymath} \Hd \Inv(K) = \Hd \{x \in \R^{d} : \calH^{d - 1}(\pi_{x}(K)) = 0\} \leq 2d - 1 - \Hd K. \end{displaymath}
This is weaker than the sharp bound \eqref{MOIneq}, so it is a natural to ask, whether the bound \eqref{PSIneqMeasures} for measures could be lowered to match \eqref{MOIneq}. The answer is affirmative:

\begin{thm}\label{mainRadial} If $\mu \in \calM(\R^{d})$ and 
\begin{equation}\label{sEnergy} I_{s}(\mu) := \iint \frac{d\mu(x) \, d\mu(y)}{|x - y|^{s}} < \infty \end{equation}
for some $s > d - 1$, then $\Hd \calS(\mu) \leq 2(d - 1) - s$.
\end{thm}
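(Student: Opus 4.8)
The plan is to run a standard Fourier-analytic / energy argument, but to extract the improved exponent $2(d-1)-s$ (rather than $2d-1-s$) by exploiting the dimensional reduction of the sphere $S^{d-1}$, in the spirit of the arguments behind \eqref{MOIneq} in \cite{MO} and \cite{O}. First I would fix a cube $Q$ disjoint from $\spt\mu$, intersect $\calS(\mu)$ with $Q$, and (after a compactness/exhaustion reduction) suppose for contradiction that $\Hd(\calS(\mu)\cap Q) > 2(d-1)-s$. By Frostman's lemma one may then find a compactly supported probability measure $\nu$ on $\calS(\mu)\cap Q$ with $I_t(\nu) < \infty$ for some $t > 2(d-1)-s$, equivalently $t = 2(d-1) - s + 2\epsilon$. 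The aim is to show that this forces $\pi_{x\sharp}\mu \in L^p(S^{d-1})$ for $\nu$-a.e.\ $x$ and some $p > 1$, contradicting the definition of $\calS(\mu)$.

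The core computation is to estimate, for a suitable $p>1$ close to $1$, the quantity $\int \|\pi_{x\sharp}\mu\|_{L^p(S^{d-1})}^{p}\,d\nu(x)$, or a Sobolev/energy surrogate for it such as $\int \int_{S^{d-1}} (\pi_{x\sharp}\mu * \psi_\delta)(\theta)^{p}\,d\theta\,d\nu(x)$ with uniform bounds as $\delta \to 0$. Expanding $(\pi_{x\sharp}\mu)^p$ and unwinding the definition of $\pi_x$, this reduces to controlling a multilinear integral of the form
\begin{displaymath}
\iint\cdots\int \frac{d\mu(y_1)\cdots d\mu(y_p)\,d\nu(x)}{\big(\text{angular distances between the }\pi_x(y_i)\text{'s}\big)^{\text{something}}},
\end{displaymath}
which one then bounds by Fubini, first integrating in $x$. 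The key geometric input is the incidence-type estimate: for fixed $y,y' \in \spt\mu$ with $|y - y'| \approx r$ and a given angular scale $\delta$, the set of centres $x$ for which $\pi_x(y)$ and $\pi_x(y')$ are within angular distance $\delta$ is contained in a union of tubes/slabs of controlled measure, so that $\nu$ of that set is bounded by roughly $\delta^{\alpha} |y-y'|^{-\beta}$ for exponents dictated by $I_t(\nu) < \infty$. Feeding this back and summing the resulting geometric series in the scales is where the gain of a full unit in the exponent — $2(d-1)-s$ versus $2d-1-s$ — must appear; morally it comes from the fact that the bad set of centres is $(d-1)$-codimensional in each pair of "colliding" directions rather than merely positive-codimensional, which is exactly the phenomenon that distinguishes \eqref{MOIneq} from \eqref{PSIneqMeasures}.

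I expect the main obstacle to be precisely this last bookkeeping step: getting the incidence estimate for the $\nu$-measure of near-collinear centre sets with the sharp exponent, uniformly over all pairs of scales $(r,\delta)$, and then checking that the multilinear integral converges for some $p = 1 + \epsilon'$ with $\epsilon'$ small depending on the $\epsilon$ gained from Frostman. A secondary technical point is justifying the passage from the mollified ($\delta$-scale) estimates to an honest $L^p$ bound on $\pi_{x\sharp}\mu$ — this is routine given uniform-in-$\delta$ control, but requires care since $\pi_{x\sharp}\mu$ is a priori only a measure. Finally, the sharpness claim ($2(d-1)-s$ cannot be lowered) should follow from the known sharpness of \eqref{MOIneq}: take a Borel set $K$ with $\Hd K = s$ realizing equality in \eqref{MOIneq} together with a Frostman measure $\mu$ on it, and observe that $\calS(\mu) \supset \Inv(K)$ up to the support, so $\Hd\calS(\mu) \geq 2(d-1) - s$.
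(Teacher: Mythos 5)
There is a genuine gap, and it sits exactly where you place your ``main obstacle''. The mechanism you propose for the core estimate --- expand $\|\pi_{x\sharp}\mu\|_{L^p}^p$ into a multilinear integral, apply Fubini in $x$, and bound the $\nu$-measure of the set of centres $x$ for which $\pi_x(y)$ and $\pi_x(y')$ nearly collide by a tube/slab estimate of the form $\delta^{\alpha}|y-y'|^{-\beta}$, then sum over scales --- is precisely the transversality computation underlying the generalised projections framework of \cite{PS}, and that computation yields the exponent $2d-1-s$ of \eqref{PSIneqMeasures}, not $2(d-1)-s$. The paper explicitly notes that the improvement to \eqref{MOIneq} does \emph{not} come from sharpening this pairwise incidence bookkeeping (the arguments of \cite{MO} and \cite{O} are ``somewhat indirect''), so the step at which you say the gain of a full unit ``must appear'' is the entire content of the theorem, and your proposal supplies no mechanism for it. (A secondary problem: for the relevant range $p\in(1,2)$ with $p$ non-integer, $\|\cdot\|_{L^p}^p$ admits no multilinear expansion, so even the setup of your core computation needs to be replaced.)

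For comparison, the actual proof avoids incidence counting altogether. One weights $\mu$ by the $(d-1)$-dimensional Riesz kernel $k_x(y)=|x-y|^{1-d}$ (harmless, since $\spt\mu$ and $\spt\nu$ are disjoint) and proves the exact identity of Lemma \ref{lem1},
\begin{displaymath}
\int \|\pi_{x\sharp}\mu_{x}\|_{L^{p}(S^{d-1})}^{p}\,d\nu(x)
=\int_{S^{d-1}}\|\pi_{e\sharp}\mu\|_{L^{p}(\pi_{e\sharp}\nu)}^{p}\,d\calH^{d-1}(e),
\end{displaymath}
which converts radial projections from $\nu$-generic centres into orthogonal projections evaluated against the projected measures $\pi_{e\sharp}\nu$. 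The right-hand side is then controlled by duality: the Sobolev embedding of Lemma \ref{SobolevLemma} (a Falconer-type estimate) bounds $\|\pi_{e\sharp}\mu\|_{L^p(\pi_{e\sharp}\nu)}$ by an energy of $\pi_{e\sharp}\mu$ times $I_t(\pi_{e\sharp}\nu)^{1/2p}$, the $e$-average of the former is $\lesssim I_s(\mu)^{1/2}$ by polar coordinates, and the $e$-average of the latter against a Frostman weight is $\lesssim I_t(\nu)$ by Kaufman's argument; the constraints on $p$ in Theorem \ref{mainTechnical} come from making these two averages compatible. Your concluding remarks --- the Frostman/contradiction reduction, the mollification to pass from a priori measures to $L^p$ densities, and the sharpness via \eqref{MOIneq} --- all match the paper, but without Lemma \ref{lem1} (or an equivalent substitute) the argument does not close.
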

The bound is sharp, essentially because \eqref{MOIneq} is, and Theorem \ref{mainRadial} implies \eqref{MOIneq}. More precisely, following \cite[Section 2.2]{O}, there exist compact sets $K \subset \R^{d}$ of any dimension $\Hd K \in (d - 1,d)$ such that 
\begin{displaymath} \Hd [\Inv(K) \setminus K] = 2(d - 1) - \dim K. \end{displaymath}
Then, the sharpness of Theorem \ref{mainRadial} follows by considering Frostman measures supported on $K$, and noting that $\mathcal{S}(\mu) \supset \Inv(K) \setminus K$ whenever $\mu \in \mathcal{M}(\R^{d})$ and $\spt \mu \subset K$. 

An open question is the validity of Theorem \ref{mainRadial} for $s = d - 1$. If $I_{d - 1}(\mu) < \infty$, Theorem 7.3 in \cite{PS} implies that $\mathcal{L}^{d}(\mathcal{S}(\mu)) = 0$, but I do not even know if $\Hd \mathcal{S}(\mu) < d$. 

Theorem \ref{mainRadial} does not immediately follow from the proof of \eqref{MOIneq} in \cite{MO} and \cite{O}, as the argument in those papers was somewhat indirect. Having said that, many observations from the previous papers still play a role in the new proof. Theorem \ref{mainRadial} will be deduced from the next statement concerning $L^{p}$-densities:

\begin{thm}\label{mainTechnical} Let $\mu \in \calM(\R^{d})$ as in Theorem \ref{main}. For $p \in (1,2)$, write 
\begin{displaymath} \calS_{p}(\mu) := \{x \in \R^{d} \setminus \spt \mu : \pi_{x\sharp}\mu \notin L^{p}(S^{d - 1})\}. \end{displaymath}
Then $\Hd \calS_{p}(\mu) \leq 2(d - 1) - s + \delta(p)$, where $\delta(p) > 0$, and $\delta(p) \to 0$ as $p \searrow 1$.
 \end{thm}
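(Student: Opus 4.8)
The plan is to argue by contradiction via a Frostman reduction, reduce to a single-scale (discretised) inequality, sum over dyadic scales, and isolate the hard part as a single-scale incidence estimate.

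First, suppose $\Hd \calS_{p}(\mu) > 2(d - 1) - s + \delta(p)$, where $\delta(p) > 0$ is to be chosen. Writing $\calS_{p}(\mu) = \bigcup_{k} \{x \in \calS_{p}(\mu) : \dist(x, \spt \mu) \geq 1/k\}$, one of the pieces has dimension above the threshold, so Frostman's lemma furnishes a non-zero measure $\nu$ with compact support, $c_{0} := \dist(\spt \nu, \spt \mu) > 0$, $\spt \nu \subset \calS_{p}(\mu)$, and $\nu(B(x,r)) \leq r^{\tau}$ for all $x,r$, with $\tau$ slightly below $2(d - 1) - s + \delta(p)$. After a further routine decomposition I will assume in addition that $\mu$ is Frostman, $\mu(B(x,r)) \leq r^{s}$, at the cost of shrinking $s$ a little; making this step precise for a measure with merely finite $s$-energy is a minor technicality, which can also be sidestepped by keeping the energy integral $I_{s}(\mu)$ explicit in the estimates below. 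It then suffices to prove
\[
  \sup_{0 < \delta < 1} \int \|\pi_{x\sharp}\mu \ast \psi_{\delta}\|_{L^{p}(S^{d - 1})}^{p} \, d\nu(x) < \infty,
\]
where $\psi_{\delta}$ is an $L^{1}$-normalised mollifier of width $\delta$ on $S^{d - 1}$: by Fatou's lemma and the weak-$\ast$ lower semicontinuity of the $L^{p}$-norm, this forces $\pi_{x\sharp}\mu \in L^{p}(S^{d - 1})$ for $\nu$-almost every $x$, contradicting $\spt \nu \subset \calS_{p}(\mu)$.

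To treat a fixed scale $\delta$, partition $S^{d - 1}$ into $\approx \delta^{-(d - 1)}$ caps $Q$ of radius $\delta$, and let $T_{x,Q}$ be the $\delta$-tube (cone of aperture $\delta$ with apex $x$); since $\dist(x, \spt \mu) \geq c_{0}$, this behaves like an ordinary $\delta$-tube on $\spt \mu$. Then $\|\pi_{x\sharp}\mu \ast \psi_{\delta}\|_{L^{p}}^{p} \lesssim \delta^{-(d - 1)(p - 1)} \sum_{Q} \mu(T_{x,Q})^{p}$, so the goal becomes $\int \sum_{Q} \mu(T_{x,Q})^{p} \, d\nu(x) \lesssim \delta^{(d - 1)(p - 1)}$, which is the size one would obtain if every $\pi_{x\sharp}\mu$ had density $\approx 1$. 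Decomposing dyadically in the level $\mu(T_{x,Q}) \sim \lambda$: the levels $\lambda \leq \delta^{d - 1}$ contribute $\lesssim \delta^{-(d - 1)} (\delta^{d - 1})^{p} = \delta^{(d - 1)(p - 1)}$ for free (there are only $\delta^{-(d-1)}$ caps), the levels $\lambda > \delta^{s - 1}$ are empty (a $\delta$-tube carries $\lesssim \delta^{s - 1}$ of the $s$-Frostman mass), and in the remaining range $\delta^{d - 1} < \lambda \leq \delta^{s - 1}$ it remains to bound the "$\lambda$-rich tube count from $x$", $m_{\lambda}(x) := \#\{Q : \mu(T_{x,Q}) \geq \lambda\}$, by $\lambda^{p} \int m_{\lambda}(x) \, d\nu(x) \lesssim \delta^{(d - 1)(p - 1) + \epsilon}$ for a small $\epsilon$ (so that summing the $\approx \log(1/\delta)$ levels is harmless). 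The disjointness of the tubes $T_{x,Q}$ on $\spt \mu$ gives the trivial bound $m_{\lambda}(x) \lesssim 1/\lambda$, whence $\lambda^{p} \int m_{\lambda} \, d\nu \lesssim \lambda^{p - 1} \leq \delta^{(s - 1)(p - 1)}$; this falls short of the target by the factor $\delta^{-(d - s)(p - 1)}$, which is small precisely because $p$ is close to $1$, and which $\delta(p)$ must absorb.

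The main obstacle is recovering this small saving. Passing to a maximal $\delta$-separated family $\mathcal{T}_{\lambda}$ of $\delta$-tubes $T$ with $\mu(T) \geq \lambda$, one has $\int m_{\lambda}(x) \, d\nu(x) \lesssim \sum_{T \in \mathcal{T}_{\lambda}} \nu(2T)$, and the tube bound $\nu(2T) \lesssim \min\{1, \delta^{\tau - 1}\}$ together with Cauchy--Schwarz and the truncated energy $\sum_{T \in \mathcal{T}_{\lambda}} \nu(2T)^{2} \lesssim \delta^{\tau - (d - 1)}$ only recovers the Peres--Schlag exponent $2d - 1 - s$; the genuine improvement to $2(d - 1) - s$ requires beating the naive count $\#\mathcal{T}_{\lambda} \lesssim \delta^{-(d - 1)}/\lambda$ by almost a full power of $\delta$. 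This is a single-scale Furstenberg/Kakeya-type problem, and it is here that the hypothesis $s > d - 1$ is indispensable: it prevents $\mu$ from concentrating near hyperplanes, thereby limiting the number and overlap of rich tubes. I would attack it through a bush/hairbrush argument, or by importing a single-scale radial-projection inequality in the spirit of Mattila--Orponen \cite{MO}, \cite{O} or of Bond, \L aba and Zahl \cite{BLZ}, run together with the higher integrability of the $\delta$-discretised energy $\iiint \1\{\angle(y - x, z - x) \lesssim \delta\} \, d\mu(y)\, d\mu(z)\, d\nu(x)$. Once the $\lambda$-uniform bound is secured, summing over $\lambda$ and then over $\delta$ completes the proof; tracking the losses shows that $\delta(p)$ may be taken comparable to $p - 1$, hence $\delta(p) \to 0$ as $p \searrow 1$, and feeding this back into the deduction of Theorem \ref{mainRadial} recovers the sharp exponent $2(d - 1) - s$.
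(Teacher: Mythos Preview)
Your proposal has a genuine gap: the hard step is not proved, only named. You correctly reduce to bounding $\lambda^{p}\int m_{\lambda}(x)\,d\nu(x)$ and correctly identify that the trivial bound $m_{\lambda}\lesssim 1/\lambda$ only recovers the Peres--Schlag exponent $2d-1-s$; but the sentence ``I would attack it through a bush/hairbrush argument, or by importing a single-scale radial-projection inequality in the spirit of \cite{MO}, \cite{O} or \cite{BLZ}'' is a hope, not an argument. The results in \cite{MO,O} are dimension estimates for the \emph{set} $\Inv(K)$, proved by indirect covering arguments, and do not immediately yield the $L^{p}$-density statement you need; standard bush/hairbrush arguments in $\R^{d}$ do not give the sharp exponent $2(d-1)-s$ either. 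So as written, the proposal stops precisely at the point where the new idea is required.

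The paper's proof takes a completely different, Fourier-analytic route and sidesteps the incidence problem entirely. The key device is the identity (Lemma~\ref{lem1})
\[
\int \|\pi_{x\sharp}\mu_{x}\|_{L^{p}(S^{d-1})}^{p}\,d\nu(x)=\int_{S^{d-1}}\|\pi_{e\sharp}\mu\|_{L^{p}(\pi_{e\sharp}\nu)}^{p}\,d\calH^{d-1}(e),
\]
where $\mu_{x}=c_{d}|x-\cdot|^{1-d}\,d\mu$; this comes from polar coordinates and converts the radial-projection $L^{p}$ problem into an orthogonal-projection $L^{p}$ problem. For fixed $e$, the quantity $\|\pi_{e\sharp}\mu\|_{L^{p}(\pi_{e\sharp}\nu)}$ is then bounded by duality and the Sobolev-type inequality $\|f\|_{L^{1}(\sigma)}\lesssim I_{d-2\sigma}(\sigma)^{1/2}\|f\|_{H^{\sigma}}$, which feeds in $I_{s}(\mu)$ via $\|\pi_{e\sharp}\mu\|_{H^{(s-(d-1))/2}}$ and $I_{t}(\nu)$ via $I_{2(d-1)-s}(f\,d\pi_{e\sharp}\nu)\lesssim I_{t}(\pi_{e\sharp}\nu)^{1/p}$ (H\"older, using $p\leq t/(2(d-1)-s)$). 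Integrating over $e\in S^{d-1}$ with Kaufman's inequality (requiring $t<(d-1)(2-p)$) closes the estimate and produces the explicit constraint \eqref{assumptions} on $p$, from which $\delta(p)\to 0$ is read off. There is no tube counting and no single-scale combinatorics anywhere.
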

 
Note that the claim is vacuous for "large" values of $p$. The dependence of $\delta(p) > 0$ on $p$ is effective and not very hard to track, see \eqref{assumptions}.
 
\begin{remark} Theorem \ref{mainTechnical} can be viewed as an extension of Falconer's exceptional set estimate \cite{Fa} from 1982. I only discuss the planar case. Falconer proved that if $I_{s}(\mu) < \infty$ for some $1 < s < 2$, then the orthogonal projections of $\mu$ to all $1$-dimensional subspaces are in $L^{2}$, outside an exceptional set of dimension at most $2 - s$. Now, orthogonal projections can be viewed as radial projections from points on the line at infinity. Alternatively, if the reader prefers a more rigorous statement, Falconer's proof shows that if $\ell \subset \R^{2}$ is any fixed line outside the support of $\mu$, then all the radial projections of $\mu$ to points on $\ell$ are in $L^{2}$, outside an exceptional set of dimension at most $2 - s$. In comparison, Theorem \ref{mainTechnical} states that the radial projections of $\mu$ to points in $\R^{2} \setminus \spt \mu$ are in $L^{p}$ for some $p > 1$, outside an exceptional set of dimension at most $2 - s$. So, the size of the exceptional set remains the same even if the "fixed line $\ell$" is removed from the statement. The price to pay is that the projections only belong to some $L^{p}$ with $p > 1$ (possibly) smaller than $2$. I do not know, if the reduction in $p$ is necessary, or an artefact of the proof. \end{remark} 
 
\subsection{Acknowledgements} I started working on the questions while taking part in the research programme \emph{Fractal Geometry and Dynamics} at Institut Mittag-Leffler. I am grateful to the organisers for letting me participate, and to the staff of the institute for making my stay very pleasant. I would also like to thank Tam\'as Keleti and Pablo Shmerkin for stimulating conversations, both on this project, and several related topics. I thank Pablo for explicitly asking, whether Theorem \ref{mainRadial} is true. The visit at the institute was enabled financially by travel grants from the \emph{V\"ais\"al\"a fund} and \emph{Mathematics fund} of the \emph{Finnish Academy of Science and Letters}. 

I am grateful to the referee for a careful reading of the manuscript, and for many helpful suggestions.

\section{Proof of Theorem \ref{main}}

If $\ell \subset \R^{2}$ is a line, I denote by $T(\ell,\delta)$ the open (infinite) tube of width $2\delta$, with $\ell$ "running through the middle", that is, $\dist(\ell,\R^{2} \setminus T(\ell,\delta)) = \delta$. The notation $B(x,r)$ stands for a closed ball with centre $x \in \R^{2}$ and radius $r > 0$. The notation $A \lesssim B$ means that there is an absolute constant $C \geq 1$ such that $A \leq CB$.

\begin{lemma}\label{auxLemma1} Assume that $\mu$ is a Borel probability measure on $B(0,1) \subset \R^{2}$, and $\mu(\ell) = 0$ for all lines $\ell \subset \R^{2}$. Then, for any $\epsilon > 0$, there exists $\delta > 0$ such that $\mu(T(\ell,\delta)) \leq \epsilon$ for all lines $\ell \subset \R^{2}$.
\end{lemma}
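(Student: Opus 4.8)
\section*{Proof proposal for Lemma \ref{auxLemma1}}

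The plan is to argue by contradiction via a compactness argument over the space of lines meeting $B(0,1)$. Suppose the statement fails: there exist $\epsilon > 0$, radii $\delta_{n} \searrow 0$, and lines $\ell_{n} \subset \R^{2}$ with $\mu(T(\ell_{n},\delta_{n})) > \epsilon$ for all $n \in \N$. Since $\spt \mu \subset B(0,1)$, a tube carrying positive $\mu$-mass must meet $B(0,1)$; assuming (as we may) that $\delta_{n} \leq 1$, this forces $\dist(0,\ell_{n}) \leq 1 + \delta_{n} \leq 2$. Parametrising lines by a direction in $[0,\pi)$ and a signed distance to the origin, the family of lines $\ell$ with $\dist(0,\ell) \leq 2$ is compact, so after passing to a subsequence I may assume $\ell_{n} \to \ell$ for some line $\ell$ with $\dist(0,\ell) \leq 2$.

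Next I would quantify this convergence inside the unit ball: there is a sequence $c_{n} \searrow 0$ with $\ell_{n} \cap B(0,1) \subset T(\ell,c_{n})$, hence $T(\ell_{n},\delta_{n}) \cap B(0,1) \subset T(\ell,c_{n} + \delta_{n})$. Fix $\eta > 0$ and take $n$ so large that $c_{n} + \delta_{n} < \eta$. Using $\mu(\R^{2} \setminus B(0,1)) = 0$, this gives
\[ \epsilon < \mu(T(\ell_{n},\delta_{n})) = \mu\bigl(T(\ell_{n},\delta_{n}) \cap B(0,1)\bigr) \leq \mu(T(\ell,\eta)). \]
Thus $\mu(T(\ell,\eta)) \geq \epsilon$ for every $\eta > 0$. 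Since $\bigcap_{\eta > 0} T(\ell,\eta) = \ell$ and the open tubes $T(\ell,\eta)$ decrease to $\ell$ as $\eta \searrow 0$, downward continuity of the finite measure $\mu$ yields $\mu(\ell) = \lim_{\eta \searrow 0} \mu(T(\ell,\eta)) \geq \epsilon > 0$, contradicting the hypothesis $\mu(\ell) = 0$.

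I do not expect a genuine obstacle in this argument; the only subtlety is that the space of all lines in $\R^{2}$ is non-compact, which is why it is essential to discard at the outset those lines whose tubes miss $B(0,1)$ — and this is precisely the step where compact support of $\mu$ is used. Everything else is routine: the uniform approximation $\ell_{n} \cap B(0,1) \subset T(\ell,c_{n})$ follows from convergence in the line parametrisation together with boundedness of $B(0,1)$, and the final limit is just continuity of measure from above applied to a decreasing sequence of sets of finite measure.
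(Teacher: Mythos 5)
Your argument is correct and is essentially identical to the paper's own proof: both argue by contradiction, use compactness of the relevant family of lines to extract a convergent subsequence, observe that $B(0,1) \cap T(\ell_{n},\delta_{n})$ is eventually contained in any fixed tube $T(\ell,\eta)$ around the limit line, and conclude $\mu(\ell) \geq \epsilon$ by continuity of measure. Your handling of the non-compactness issue (restricting to lines with $\dist(0,\ell_{n}) \leq 1 + \delta_{n}$) is in fact slightly more careful than the paper's.
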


\begin{proof} Assume not, so there exists $\epsilon > 0$, a sequence of positive numbers $\delta_{1} > \delta_{2} > \ldots > 0$ with $\delta_{i} \searrow 0$, and a sequence of lines $\{\ell_{i}\}_{i \in \N} \subset \R^{2}$ with $\mu(T(\ell_{i},\delta_{i})) \geq \epsilon$. Since $\spt \mu \subset B(0,1)$, one has $\ell_{i} \cap B(0,1) \neq \emptyset$ for all $i \in \N$. Consequently, there exists a subsequence $(i_{j})_{j \in \N}$, and a line $\ell \subset \R^{2}$ such that $\ell_{j} \to \ell$ in the Hausdorff metric. Then, for any given $\delta > 0$, there exists $j \in \N$ such that
\begin{displaymath} B(0,1) \cap T(\ell_{i_{j}},\delta_{i_{j}}) \subset T(\ell,\delta), \end{displaymath}
so that $\mu(T(\ell,\delta)) \geq \epsilon$. It follows that $\mu(\ell) \geq \epsilon$, a contradiction. \end{proof}

The next lemma contains all the information needed to prove Theorem \ref{main}. I state two versions: the first one is slightly easier to read and apply, while the second one is slightly more detailed. 
\begin{lemma}\label{mainLemmaSimplified} Assume that $\mu,\nu$ are Borel probability measures with compact supports $K,E \subset B(0,1)$, respectively. Assume that both measures $\mu$ and $\nu$ satisfy a Frostman condition with exponents $\kappa_{\mu},\kappa_{\nu} \in (0,2]$, respectively:
\begin{equation}\label{frostmanBound} \mu(B(x,r)) \leq C_{\mu} r^{\kappa_{\mu}} \quad \text{and} \quad \nu(B(x,r)) \leq C_{\nu} r^{\kappa_{\nu}} \end{equation}
for all balls $B(x,r) \subset \R^{2}$, and for some constants $C_{\mu},C_{\nu} \geq 1$. Assume further that $\mu(\ell) = 0$ for all lines $\ell \subset \R^{2}$. Fix also 
\begin{displaymath} 0 < \tau < \tfrac{\kappa_{\mu}}{2} \quad \text{and} \quad \epsilon > 0, \end{displaymath}
and write $\delta_{k} := 2^{-(1 + \epsilon)^{k}}$. 

Then, there exists a compact subset $K' \subset K$ with
\begin{displaymath} \mu(K') \geq \frac{1}{2}, \end{displaymath}
a number $\eta = \eta(\epsilon,\kappa_{\mu},\kappa_{\nu},\tau) > 0$, an index $k_{0} = k_{0}(\epsilon,\mu,\kappa_{\nu},\tau) \in \N$, and a point $x \in E$ with the following property. If $k > k_{0}$, and $T(\ell_{1},\delta_{k}),\ldots,T(\ell_{N},\delta_{k})$ is a family of $\delta_{k}$-tubes of cardinality $N \leq \delta_{k}^{-\tau}$, each containing $x$, then
\begin{equation}\label{form1b} \mu\left(K' \cap \bigcup_{j = 1}^{N} T(\ell_{j},\delta_{k})\right) \leq \delta_{k}^{\eta}. \end{equation}
\end{lemma}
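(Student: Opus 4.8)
\emph{The plan.} The plan is to reduce the statement, via a Borel--Cantelli argument along the super-lacunary scales $\delta_k$, to a single-scale estimate, and to prove the latter by separating, among the $\delta_k$-tubes through a centre $x$, those along which $\mu$ concentrates near a single line (handled by Lemma \ref{auxLemma1}) from those along which it is genuinely spread (handled by a covering/incidence estimate). The freedom in choosing $K' \subset K$ will be spent entirely on the first alternative.

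\emph{Reduction.} It suffices to control one tube at one scale: if $K' \subset K$ is compact and, for a given $k$, every $\delta_k$-tube $T \ni x$ satisfies $\mu(K' \cap T) \le \delta_k^{\eta'}$ with $\eta' > \tau$, then any union of $N \le \delta_k^{-\tau}$ such tubes has $\mu(K' \cap \bigcup_j T_j) \le N\delta_k^{\eta'} \le \delta_k^{\eta' - \tau}$, which is \eqref{form1b} with $\eta := \eta' - \tau$. So I would fix a compact $K' \subset K$ with $\mu(K') \ge \tfrac12$ (constructed below), put $B_k := \{x \in \R^{2} : \text{some } \delta_k\text{-tube } T \ni x \text{ has } \mu(K' \cap T) > \delta_k^{\eta'}\}$, and aim for $\nu(B_k) \le \delta_k^{c}$ with a fixed $c > 0$ for all $k$ past some threshold depending on $\mu$. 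Since $\delta_{k+1} = \delta_k^{1+\epsilon}$, the series $\sum_k \delta_k^{c}$ converges at a doubly-exponential rate, so $\nu(\bigcup_{k > k_0} B_k) < 1 = \nu(E)$ for $k_0$ large enough; any $x \in E \setminus \bigcup_{k > k_0} B_k$ then works. This is exactly why $k_0$ is permitted to depend on $\mu$ (not merely on $C_\mu$): the single-scale bound below will only hold once $\delta_k$ is small relative to a modulus of continuity supplied by Lemma \ref{auxLemma1}.

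\emph{The single-scale estimate.} Fix $\delta = \delta_k$ and a tube $T \ni x$. Decomposing $T$ into dyadic annuli $A(x;r,2r)$ about $x$, on each annulus $T$ is an $r \times \delta$ box, which the Frostman bound $\mu(B(\cdot,\delta)) \le C_\mu\delta^{\kappa_\mu}$ covers by $\lesssim r/\delta$ balls of radius $\delta$; summing the geometric series gives the trivial bound $\mu(K' \cap T) \lesssim C_\mu\delta^{\kappa_\mu - 1}$, hence the Lemma for the restricted range $\tau < \kappa_\mu - 1$. Two further ideas are needed to push $\eta'$ up towards $\kappa_\mu/2$. First, the covering above is wasteful exactly when the mass of $\mu$ inside $T$ hugs the core line of $T$, i.e. lies in a thin slab; here Lemma \ref{auxLemma1} enters, and $K'$ is built to remove this possibility: using the Lemma to select widths $\rho_m \downarrow 0$ with $\sup_\ell \mu(T(\ell,\rho_m)) \le 2^{-m}$, one discards from $K$, over countably many stages of total $\mu$-mass $\le \tfrac12$, the mass that would otherwise sit in $\rho_m$-slabs about admissible core lines at the scales relevant to stage $m$, so that for $x \in E$ and $k > k_0$ no $\delta_k$-tube through $x$ carries appreciable $\mu|_{K'}$-mass near its own core. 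Performing this removal uniformly over the compact-but-infinite family of centres $x \in E$ is done, as in the proof of Lemma \ref{auxLemma1}, via a finite net of lines. Second, once the slab alternative is killed, the remaining $\mu|_{K'}$-mass in $T$ is two-dimensionally spread across the annuli, and to profit from this I would run an $L^{2}(\nu)$ estimate for $x \mapsto \sup_{T \ni x}\mu(K' \cap T)$: expanding the square reduces matters to estimating, for pairs $y,z$, the $\nu$-measure of centres $x$ nearly collinear with $y$ and $z$, which the Frostman regularity of $\nu$ controls, while the non-concentration of $\mu$ near lines — now invoked at the intermediate widths $\delta\,|y - z|$, which is precisely what the removal step secured — keeps the pair-count from accumulating. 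The exponent $\kappa_\mu/2$ should emerge as the break-even point of this computation, matching the target dimension $(\Hd K)/2$ of Theorem \ref{main}; pushing past it is, as the introduction notes, a sum-product problem.

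\emph{The hard part.} The principal obstacle is the construction of $K'$: a single compact set, costing only half of $\mu$, that neutralises the slab alternative simultaneously at all scales $\delta_k$ past $k_0$, at all locations in $\spt\mu$, and uniformly over all admissible centres $x \in E$ — even though Lemma \ref{auxLemma1} offers only a qualitative modulus with no polynomial rate. Interlocking this construction with the $\nu$-bookkeeping of the Borel--Cantelli step and with the incidence estimate that supplies the gain over the trivial covering bound is the technical heart of the proof, and the super-lacunary scale sequence $\delta_k = 2^{-(1+\epsilon)^{k}}$ is exactly what makes the accumulated error series summable.
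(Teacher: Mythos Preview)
Your plan has two interlocking gaps. The $L^{2}(\nu)$ incidence step does not produce the threshold $\tau<\kappa_{\mu}/2$: expanding $\int \mu(K'\cap T_{x})^{2}\,d\nu(x)$ and applying Fubini leaves you with $\iint \nu\{x:y,z\in T_{x}\ni x\}\,d\mu(y)\,d\mu(z)$, and the inner set lies in a tube of width $\sim\delta/|y-z|$ around the line through $y,z$. The only available bound on $\nu$ of a $w$-tube is $\lesssim w^{\kappa_{\nu}-1}$, which is vacuous for $\kappa_{\nu}\le 1$ and, for $\kappa_{\nu}>1$, yields via Chebyshev only $\tau<(\kappa_{\nu}-1)/2$. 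Since the lemma must hold for \emph{every} $\kappa_{\nu}>0$, this is fatal; and your remark that $\mu$-non-concentration ``at widths $\delta|y-z|$'' rescues the count does not apply, because the integrand here is a $\nu$-measure and no pruning of $K'$ alters $\nu$. What this route recovers is essentially the Peres--Schlag mechanism behind \eqref{PSIneq}, which the lemma is designed to beat.

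The deeper problem is that $K'$ cannot be fixed before $x$ is located. You propose to discard ``$\rho_{m}$-slabs about admissible core lines'' at each stage, but at scale $\delta_{k}$ there are $\sim\delta_{k}^{-2}$ essentially distinct tubes meeting $E$, and a finite net of them either misses almost all or, if $\delta_{k}$-dense, forces you to remove slabs covering all of $B(0,1)$. The paper's argument is adaptive and interleaves the two sides: at each scale it decomposes the bad set $\textbf{Bad}_{k}\subset E_{k}$ by direction, proves (Lemma~\ref{auxLemma3}) that each directional piece is covered by $\lesssim\delta^{-O(\eta)}$ tubes of width $\delta^{\rho}$ with $\rho\sim\kappa_{\mu}/2-\tau$ --- this geometric ``flower'' structure, not an $L^{2}$ count, is where $\kappa_{\mu}/2$ enters --- and then either $\textbf{Bad}_{k}$ has small $\nu$-mass (set $E_{k+1}=E_{k}\setminus\textbf{Bad}_{k}$, $K_{k+1}=K_{k}$), or a positive fraction of $\nu|_{E_{k}}$ sits in a single $\delta^{\rho}$-tube $T_{0}$, in which case $E_{k+1}$ is confined to $T_{0}$ and $K_{k+1}$ is $K_{k}$ minus \emph{one} slab around the core line of $T_{0}$. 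The slab to be removed is thus dictated by where $E_{k+1}$ has landed; this feedback between the $K$-side and the $E$-side is what keeps the total mass removed from $K$ below $\tfrac12$ while still killing the concentration for the surviving centres, and it is absent from your outline.
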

Roughly speaking, the conclusion \eqref{form1b} means that $K'$ has a radial projection of dimension $\geq \tau$ relative to the viewpoint $x \in E$, since only a tiny fraction of $K'$ can be covered by $\leq \delta_{k}^{-\tau}$ tubes of width $2\delta_{k}$ containing $x$. 

The set $K' \subset K$ and the point $x \in E$ will be found by induction on the scales $\delta_{k}$. To set the scene for the induction, it is convenient to state a more detailed version of the lemma:

\begin{lemma}\label{mainLemma} Assume that $\mu,\nu$ are Borel probability measures with compact supports $K,E \subset B(0,1)$, respectively. Assume that both measures $\mu$ and $\nu$ satisfy a Frostman condition with exponents $\kappa_{\mu},\kappa_{\nu} \in (0,2]$, respectively:
\begin{equation}\label{frostmanBound} \mu(B(x,r)) \leq C_{\mu} r^{\kappa_{\mu}} \quad \text{and} \quad \nu(B(x,r)) \leq C_{\nu} r^{\kappa_{\nu}} \end{equation}
for all balls $B(x,r) \subset \R^{2}$, and for some constants $C_{\mu},C_{\nu} \geq 1$. Assume further that $\mu(\ell) = 0$ for all lines $\ell \subset \R^{2}$. Fix also
\begin{displaymath} 0 < \tau < \tfrac{\kappa_{\mu}}{2} \quad \text{and} \quad \epsilon > 0, \end{displaymath}
and write $\delta_{k} := 2^{-(1 + \epsilon)^{k}}$.

Then, there exist numbers $\beta = \beta(\kappa_{\mu},\kappa_{\nu},\tau) > 0$, $\eta = \eta(\epsilon,\kappa_{\mu},\kappa_{\nu},\tau) > 0$, and an index $k_{0} = k_{0}(\epsilon,\mu,\kappa_{\nu},\tau) \in \N$ with the following properties. For all $k \geq k_{0}$, there exist
\begin{itemize}
\item[(a)] compact sets $K \supset K_{k_{0}} \supset K_{k_{0} + 1} \ldots$ with 
\begin{equation}\label{form6} \mu(K_{k}) \geq 1 - \sum_{k_{0} \leq j < k} (\tfrac{1}{4})^{j - k_{0} + 1} \geq \frac{1}{2}, \end{equation}
\item[(b)] compact sets $E \supset E_{k_{0}} \supset E_{k_{0} + 1} \ldots$ with $\nu(E_{k}) \geq \delta_{k}^{\beta}$
\end{itemize}
with the following property: if $k > k_{0}$, $x \in E_{k}$, and $T(\ell_{1},\delta_{k}),\ldots,T(\ell_{N},\delta_{k})$ is a family of tubes of cardinality $N \leq \delta_{k}^{-\tau}$, each containing $x$, then
\begin{equation}\label{form1a} \mu\left(K_{k} \cap \bigcup_{j = 1}^{N} T(\ell_{j},\delta_{k})\right) \leq \delta_{k}^{\eta}. \end{equation}
\end{lemma}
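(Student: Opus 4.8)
The plan is to prove Lemma \ref{mainLemma} by induction on $k$, constructing the nested families $K_k$ and $E_k$ simultaneously. The base case simply sets $K_{k_0} := K$ and $E_{k_0} := E$, with $k_0$ chosen large enough (depending on $\epsilon$, $\mu$, $\kappa_\nu$, $\tau$) so that various error terms below are absorbed; note $\mu(K_{k_0}) = 1$ and $\nu(E_{k_0}) = 1 \geq \delta_{k_0}^\beta$ trivially. For the inductive step, assume $K_k$ and $E_k$ have been constructed with the stated mass bounds, and that \eqref{form1a} holds at scale $\delta_k$ for all $x \in E_k$. We must produce $K_{k+1} \subset K_k$ and $E_{k+1} \subset E_k$ so that \eqref{form1a} holds at the finer scale $\delta_{k+1} = 2^{-(1+\epsilon)^{k+1}} = \delta_k^{1+\epsilon}$.

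The heart of the argument is the scale-$\delta_k$-to-$\delta_{k+1}$ refinement. Suppose, for a given $x \in E_k$, that a bad family of $N \leq \delta_{k+1}^{-\tau}$ tubes $T(\ell_j, \delta_{k+1})$ through $x$ captures more than $\delta_{k+1}^\eta$ of $\mu|_{K_k}$. Each thin $\delta_{k+1}$-tube through $x$ lies inside a fat $\delta_k$-tube through $x$; after fattening, the $N$ thin tubes are contained in at most $N$ fat $\delta_k$-tubes through $x$, and $N \leq \delta_{k+1}^{-\tau} = \delta_k^{-(1+\epsilon)\tau}$. Since $(1+\epsilon)\tau$ may exceed $\kappa_\mu/2$, we cannot directly invoke \eqref{form1a} at scale $\delta_k$ (which only allows $\leq \delta_k^{-\tau}$ tubes). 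The fix is a pigeonholing/branching argument: within each fat $\delta_k$-tube $T$ that carries substantial $\mu|_{K_k}$-mass, the thin $\delta_{k+1}$-tubes through $x$ refining $T$ correspond to a rescaled copy of the same problem one dimension down — essentially the $\mu$-mass in $T$ projected radially from $x$ must be spread out because $\mu(\ell) = 0$ for every line and, quantitatively, Lemma \ref{auxLemma1} forces $\mu(T(\ell',\rho))$ to be small uniformly for all lines $\ell'$ once $\rho$ is small. Combining the ``few fat tubes'' input from \eqref{form1a} with the ``each fat tube is thinly spread'' input from Lemma \ref{auxLemma1} (applied to the renormalised restriction of $\mu$, using the Frostman bound \eqref{frostmanBound} to control the renormalisation), one derives that the bad event can only happen on a set of viewpoints $x$ of small $\nu$-measure. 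Concretely, one shows the set $B_{k+1} \subset E_k$ of ``bad'' $x$ has $\nu(B_{k+1}) \leq \tfrac14 \nu(E_k)$ or, more precisely, $\leq \delta_{k+1}^\beta$-worth of mass is lost, using the Frostman bound on $\nu$ to say the bad viewpoints, being confined to a controlled union of tubes, are $\nu$-small; then set $E_{k+1} := E_k \setminus B_{k+1}$ and correspondingly trim $K_k$ by removing a set of $\mu$-mass at most $(\tfrac14)^{k - k_0 + 1}$ to get $K_{k+1}$, preserving \eqref{form6}.

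The main obstacle, and where the real work lies, is making the single-step refinement quantitative with exponents $\eta, \beta > 0$ that do \emph{not} degenerate as $k \to \infty$. The scales $\delta_k = 2^{-(1+\epsilon)^k}$ grow doubly exponentially, so there is very little room: each step one can only afford to lose a geometrically small fraction of $\mu$-mass (hence the $(\tfrac14)^{j-k_0+1}$ in \eqref{form6}) and a $\delta_k^\beta$-controlled fraction of $\nu$-mass, yet the gain from Lemma \ref{auxLemma1} is merely a qualitative ``$\mu(T(\ell,\delta)) \to 0$,'' which must be upgraded to a uniform polynomial rate $\delta^{\eta_0}$ — this is where the hypothesis $\mu(\ell)=0$ together with compactness (via the proof of Lemma \ref{auxLemma1}) is leveraged, and where $k_0$ must be taken large depending on the full measure $\mu$, not just its Frostman data. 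Balancing $\tau < \kappa_\mu/2$ against the multiplicative blow-up $N \mapsto N^{1+\epsilon}$ in the tube count is exactly what forces the factor $\tfrac12$: the $\delta_k^{-\tau}$ thin tubes refine into at most $\delta_k^{-\tau}$ fat tubes, each of which by the projection/Lemma \ref{auxLemma1} input can be subdivided into many sub-tubes carrying total mass $\lesssim \delta_k^{\eta_0}$, and only when $\tau$ stays below $\kappa_\mu/2$ do the geometric series in the iteration converge. I would isolate this single-step estimate as a standalone sublemma, prove it carefully with explicit (if ugly) constants, and then run the induction mechanically; deducing Theorem \ref{main} from Lemma \ref{mainLemmaSimplified} is then routine, applying it to Frostman measures $\mu$ on $K$ and $\nu$ on $E$ with $\kappa_\mu$ close to $\Hd K$ and using \eqref{form1b} to extract a viewpoint $x \in E$ with $\Hd \pi_x(K') \geq \tau$ for $\tau$ arbitrarily close to $(\Hd K)/2$.
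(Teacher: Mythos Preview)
Your sketch has the right overall shape (induction on $k$, identify a bad set of viewpoints, trim $E_k$ and $K_k$), but the mechanism you propose for the inductive step does not work, and the paper's actual argument is structurally different. The specific problem is your plan to pass from $\delta_{k+1}$ to $\delta_k$ by fattening the thin tubes and then invoking ``each fat tube is thinly spread'' via Lemma~\ref{auxLemma1}. That lemma is purely qualitative: for each $\epsilon$ it produces some $\delta_0(\epsilon)$ with $\mu(T(\ell,\delta_0))\le\epsilon$, with no rate. At step $k$ you would need $\mu(T(\ell,\delta_{k+1}))\le\delta_{k+1}^{\eta_0}$ uniformly in $\ell$, and no such bound follows from the hypotheses; choosing $k_0$ large cannot secure this at \emph{all} later scales. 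In the paper, Lemma~\ref{auxLemma1} is used only to initialise the first $\Gamma$ steps; thereafter the single-tube smallness one needs comes from the inductive hypothesis itself, applied at the much coarser scale $\delta_{k-\Gamma}$ (this is condition (i) and the chain \eqref{form26}). Your outline has no analogue of this $\Gamma$-step feedback.

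The idea you are missing is the structural description of the bad set. Rather than relating thin tubes to fat tubes, the paper partitions directions into $\sim\delta^{-\eta}$ arcs $J_d$ and, for each $d$, studies the set $\mathbf{Bad}_k^d$ of viewpoints whose bad tubes point in $J_d$. The geometric core (Lemma~\ref{auxLemma3}) shows each $\mathbf{Bad}_k^d$ is covered by $\lesssim\delta^{-C\eta}$ tubes of width $\delta^{\rho}$, $\rho=\rho(\kappa_\mu,\tau)>0$, in direction $J_d$; this uses the Frostman bound on $\mu$ (not Lemma~\ref{auxLemma1}): if a $\delta$-tube $T$ through $x$ carries mass $\gtrsim\delta^{\tau+C\eta}$, then any other bad viewpoint must lie in the $\delta^{4\rho}$-neighbourhood of $T$, since from far away $T\cap K_k$ cannot be covered by $\delta^{-\tau}$ tubes through that viewpoint. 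The Frostman bound on $\nu$ then shows that viewpoints bad in two non-adjacent directions form a set of tiny $\nu$-measure, and a dichotomy closes the step: either most of $E_k$ is already good, or the bad set concentrates in a single $\delta^{\rho}$-tube $T_0$, in which case one takes $E_{k+1}\subset T_0$ and $K_{k+1}:=K_k\setminus T(\ell_0,\delta^{\eta/2})$, the removed $\mu$-mass being small exactly by the coarse-scale hypothesis at level $k-\Gamma$. None of this directional decomposition, flower structure, or dichotomy appears in your outline, and without it the induction does not close.
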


\begin{remark}\label{k0Large} The index $k_{0}$ can be chosen as large as desired; this will be clear from the proof below. It will also be used on many occasions, without separate remark, that $\delta_{k}$ can be assumed very small for all $k \geq k_{0}$. I also record that Lemma \ref{mainLemmaSimplified} follows from Lemma \ref{mainLemma}: simply take $K'$ to be the intersection of all the sets $K_{j}$, $j \geq k_{0}$, and let $x \in E$ be any point in the intersection of all the sets $E_{j}$, $j \geq k_{0}$. \end{remark}

\begin{proof} As stated above, the proof is by induction, starting at the largest scale $k_{0}$, which will be presently defined. Fix $\eta = \eta(\epsilon,\kappa_{\mu},\kappa_{\nu},\tau) > 0$ and
\begin{equation}\label{form27} \Gamma = \Gamma(\epsilon,\kappa_{\mu},\kappa_{\nu},\tau) \in \N \end{equation}
The number $\Gamma$ will be specified at the very end of the proof, right before \eqref{form32}, and there will be several requirements for the number $\eta$, see \eqref{form33}, \eqref{form25}, and \eqref{form34}. Applying Lemma \ref{auxLemma1}, first pick an index $k_{1} = k_{1}(\epsilon,\mu,\kappa_{\nu},\tau) \in \N$ such that $\mu(T(\ell,\delta_{k_{1}})) \leq (\tfrac{1}{4})^{\Gamma + 1}$ for all tubes $T(\ell,\delta_{k_{1}}) \subset \R^{2}$, and
\begin{equation}\label{form13} \delta_{k - \Gamma}^{\eta} \leq (\tfrac{1}{4})^{k - \Gamma + 1}, \qquad k \geq k_{1}. \end{equation}
Set $k_{0} := k_{1} + \Gamma$. Then, the following holds for all $k \in \{k_{0},\ldots,k_{0} + \Gamma\}$. For any subset $K' \subset K$, and any tube $T(\ell,\delta_{k - \Gamma}) \subset \R^{2}$, one has 
\begin{equation}\label{form12} \mu(K' \cap T(\ell,\delta_{k - \Gamma})) \leq \mu(T(\ell,\delta_{k_{1}})) \leq (\tfrac{1}{4})^{\Gamma + 1} \leq (\tfrac{1}{4})^{k - k_{0} + 1}. \end{equation} 
Define
\begin{displaymath} K_{k} := K \quad \text{and} \quad E_{k} := E, \qquad k_{1} \leq k \leq k_{0}. \end{displaymath}
(The definitions of $E_{k},K_{k}$ for $k_{1} \leq k < k_{0}$ are only given for notational convenience.)

I start by giving an outline of how the induction will proceed. Assume that, for a certain $k \geq k_{0}$, the sets $K_{k}$ and $E_{k}$ have been constructed such that
\begin{itemize}
\item[(i)] the condition \eqref{form12} is satisfied with $K' = K_{k}$, and for all tubes $T(\ell,\delta_{k - \Gamma})$ with $T(\ell,\delta_{k - \Gamma}) \cap E_{k - \Gamma} \neq \emptyset$.
\item[(ii)] $K_{k}$ and $E_{k}$ satisfy the measure lower bounds (a) and (b) from the statement of the lemma.
\end{itemize}
Under the conditions (i)-(ii), I claim that it is possible to find subsets $K_{k + 1} \subset K_{k}$ and $E_{k + 1} \subset E_{k}$, satisfying (ii) at level $k + 1$, and also the non-concentration condition \eqref{form1a} at level $k + 1$. This is why \eqref{form1a} is only claimed to hold for $k > k_{0}$, and no one is indeed claiming that it holds for the sets $K_{k_{0}}$ and $E_{k_{0}}$. These sets satisfy (i), however, which should be viewed as a weaker substitute for \eqref{form1a} at level $k$, which is just strong enough to guarantee \eqref{form1a} at level $k + 1$. There is one obvious question at this point: if (i) at level $k$ gives \eqref{form1a} at level $k + 1$, then where does one get (i) back at level $k + 1$?
 
If $k + 1 \in \{k_{0},\ldots,k_{0} + \Gamma\}$, the condition (i) is simply guaranteed by the choice of $k_{0}$ (one does not even need to assume that $T(\ell,\delta_{k - \Gamma}) \cap E_{k - \Gamma} \neq \emptyset$). For $k + 1 > k_{0} + \Gamma$, this is no longer true. However, for $k + 1 > \Gamma + k_{0}$, one has $k + 1 - \Gamma > k_{0}$, and thus $K_{k + 1- \Gamma}$ and $E_{k + 1- \Gamma}$ have already been constructed to satisfy \eqref{form1a}. In particular, if $E_{k + 1 - \Gamma} \cap T(\ell,\delta_{k + 1 - \Gamma}) \neq \emptyset$, then
\begin{equation}\label{form26} \mu(K_{k + 1} \cap T(\ell,\delta_{k + 1 - \Gamma})) \leq \mu(K_{k + 1 - \Gamma} \cap T(\ell,\delta_{k + 1- \Gamma})) \leq \delta_{k + 1- \Gamma}^{\eta} \leq (\tfrac{1}{4})^{(k + 1) - k_{0} + 1} \end{equation}
 by \eqref{form1a} and \eqref{form13}. This means that (i) is satisfied at level $k + 1$, and the induction may proceed.  
 
So, it remains to prove that (i)--(ii) at level $k$ imply (ii) and \eqref{form1a} at level $k + 1$. To avoid clutter, I write
\begin{displaymath} \delta := \delta_{k + 1}. \end{displaymath}
Assume that the sets $K_{k},E_{k}$ have been constructed for some $k \geq k_{0}$, satisfying (i)--(ii). The main task is to understand the structure of the set of points $x \in E_{k}$ for which \eqref{form1a} fails. To this end, we define the set $\textbf{Bad}_{k} \subset E_{k}$ as follows: $x \in \textbf{Bad}_{k}$, if and only if $x \in E_{k}$, and there exist $N \leq \delta^{-\tau}$ tubes $T(\ell_{1},\delta),\ldots,T(\ell_{N},\delta)$, each containing $x$, such that
\begin{equation}\label{form23} \mu\left(K_{k} \cap \bigcup_{j = 1}^{N} T(\ell_{j},\delta) \right) > \delta^{\eta}. \end{equation}
Note that if $\textbf{Bad}_{k} = \emptyset$, then one can simply define $E_{k + 1} := E_{k}$ and $K_{k + 1} := K_{k}$, and (ii) and \eqref{form1a} (at level $k + 1$) are clearly satisfied.

Instead of analysing $\textbf{Bad}_{k}$ directly, it is useful to split it up into "directed" pieces, and digest the pieces individually. To make this precise, let $S$ be the "space of directions"; for concreteness, I identify $S$ with the upper half of the unit circle. Then, if $T = T(\ell,\delta) \subset \R^{2}$ is a tube, I denote by $\dir(T)$ the unique vector $e \in S$ such that $\ell \| e$.

Recall the small parameter $\eta > 0$, and partition $S$ into $D = \delta^{-\eta}$ arcs $J_{1},\ldots,J_{D}$ of length $\sim \delta^{\eta}$.\footnote{Here, it might be better style to pick another letter, say $\alpha > 0$, in place of $\eta$, since the two parameters play slightly different roles in the proof. Eventually, however, one would end up considering $\min\{\eta,\alpha\}$, and it seems a bit cleaner to let $\eta > 0$ be a "jack of all trades" from the start.} For $d \in \{1,\ldots,D\}$ fixed ("$d$" for "direction"), consider the set $\textbf{Bad}_{k}^{d}$: it consists of those points $x \in E_{k}$ such that there exist $N \leq \delta^{-\tau}$ tubes $T(\ell_{1},\delta),\ldots,T(\ell_{N},\delta)$, each containing $x$, with $\dir(T(\ell_{i},\delta)) \in J_{d}$, and satisfying
\begin{displaymath} \mu\left(K_{k} \cap \bigcup_{j = 1}^{N} T(\ell_{j},\delta) \right) > \delta^{2\eta}. \end{displaymath}
Since the direction of every possible tube in $\R^{2}$ belongs to one of the arcs $J_{i}$, and there are only $D = \delta^{-\eta}$ arcs in total, one has
\begin{equation}\label{form14} \textbf{Bad}_{k} \subset \bigcup_{d = 1}^{D} \textbf{Bad}_{k}^{d}. \end{equation}

The next task is to understand the structure of $\textbf{Bad}_{k}^{d}$ for a fixed direction $d \in \{1,\ldots,D\}$. I claim that $\textbf{Bad}_{k}^{d}$ looks like a garden of flowers, with all the petals pointing in direction $J_{d}$, see Figure \ref{fig2} for a rough idea.
\begin{figure}[h!]
\begin{center}
\includegraphics[scale = 0.6]{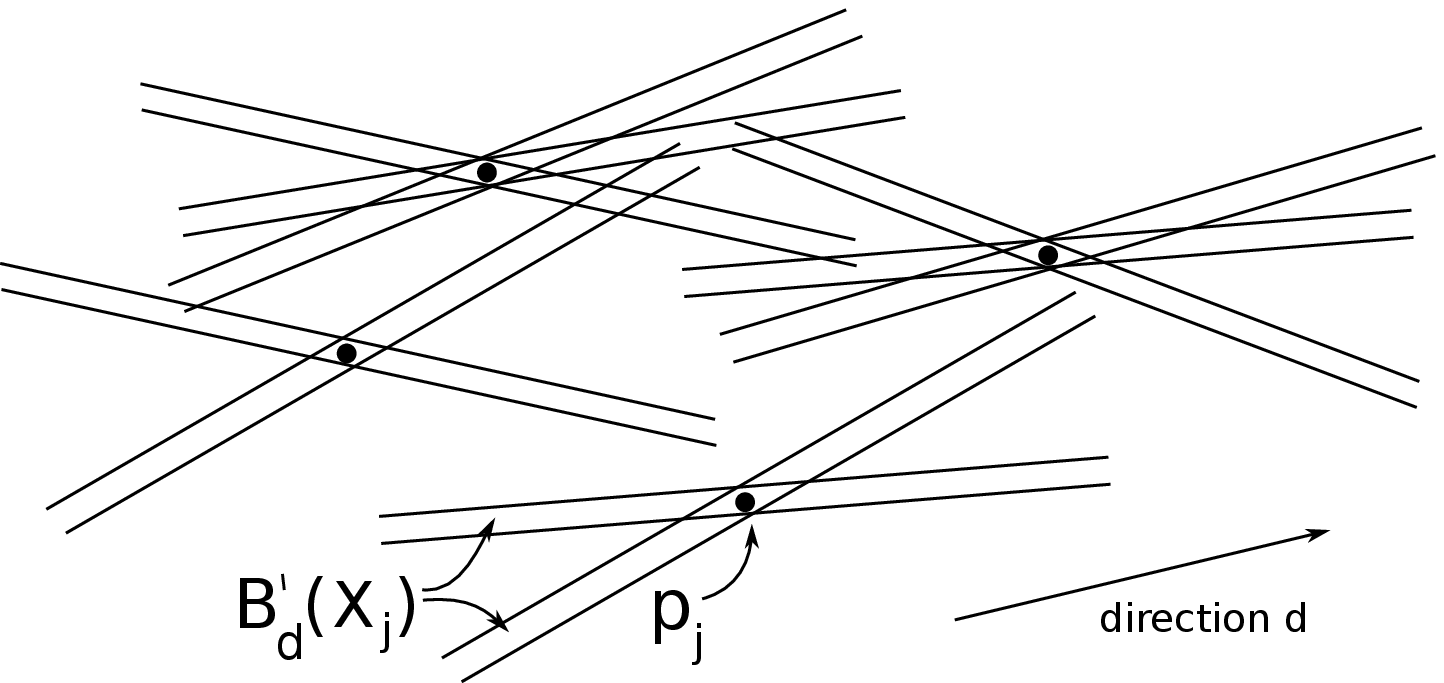}
\caption{The set $\textbf{Bad}_{k}^{d}$.}\label{fig2}
\end{center}
\end{figure}
To make the statement more precise, I introduce an additional piece of notation. Fox $X \subset K_{k}$, let $B_{d}(X)$ consist of those points $x \in E_{k}$ such that $X$ can be covered by $N \leq \delta^{-\tau}$ tubes $T(\ell_{1},\delta),\ldots,T(\ell_{N},\delta)$, with directions $\dir(T(\ell_{i},\delta)) \in J_{d}$, and each containing $x$. Then, note that
\begin{equation}\label{form35} \textbf{Bad}_{k}^{d} = \{x \in E_{k} : \exists \, X \subset K_{k} \text{ with } \mu(X) > \delta^{2\eta} \text{ and } x \in B_{d}(X)\}. \end{equation}
The sets $B_{d}(X)$ also have the trivial but useful property that 
\begin{displaymath} X \subset X' \subset K_{k} \quad \Longrightarrow \quad B_{d}(X') \subset B_{d}(X). \end{displaymath}

There are two steps in establishing the "garden" structure of $\textbf{Bad}_{k}^{d}$: first, one needs to find the "flowers", and second, one needs to check that the sets obtained actually look like flowers in a non-trivial sense. I start with the former task. Assuming that $\textbf{Bad}_{k}^{d} \neq \emptyset$, pick any point $x_{1} \in \textbf{Bad}_{k}^{d}$, and an associated subset $X_{1} \subset K_{k}$ with 
\begin{displaymath} \mu(X_{1}) > \delta^{2\eta} \quad \text{and} \quad x_{1} \in B_{d}(X_{1}). \end{displaymath}
Then, assume that $x_{1},\ldots,x_{m} \in \textbf{Bad}_{k}^{d}$ and $X_{1},\ldots,X_{m}$ have already been chosen with the properties above, and further satisfying
\begin{equation}\label{form15} \mu(X_{i} \cap X_{j}) \leq \delta^{4\eta}/2, \qquad 1 \leq i < j \leq m. \end{equation}
Then, see if there still exists a subset $X_{m + 1} \subset K_{k}$ with the following three properties: $\mu(X_{m + 1}) > \delta^{2\eta}$, $B_{d}(X_{m + 1}) \neq \emptyset$, and $\mu(X_{m + 1} \cap X_{i}) \leq \delta^{4\eta}/2$ for all $1 \leq i \leq m$. If such a set no longer exists, stop; if it does, pick $x_{m + 1} \in B_{d}(X_{m + 1})$, and add $X_{m + 1}$ to the list.

It follows from the "competing" conditions $\mu(X_{i}) > \delta^{2\eta}$, and \eqref{form15}, that the algorithm needs to terminate in at most
\begin{equation}\label{form16} M \leq 2\delta^{-4\eta} \end{equation}
Indeed, assume that the sets $X_{1},\ldots,X_{M}$ have already been constructed, and consider the following chain of inequalities:

\begin{align*} \frac{1}{M} + \frac{1}{M(M - 1)}\sum_{i_{1} \neq i_{2}} \mu(X_{i_{1}} \cap X_{i_{2}}) & \geq \frac{1}{M^{2}}\sum_{i_{1},i_{2} = 1}^{M} \mu(X_{i_{1}} \cap X_{i_{2}})\\
& = \frac{1}{M^{2}} \int \sum_{i_{1},i_{2} = 1}^{M} \1_{X_{i_{1}} \cap X_{i_{2}}}(x) \, d\mu(x)\\
& = \frac{1}{M^{2}} \int [\card \{1 \leq i \leq M : x \in X_{i}\}]^{2} \, d\mu(x)\\
& \geq \frac{1}{M^{2}} \left( \int \card\{1 \leq i \leq M : x \in X_{i}\} \, d\mu(x) \right)^{2}\\
& = \frac{1}{M^{2}} \left( \sum_{i = 1}^{M} \mu(X_{i}) \right)^{2} > \delta^{4\eta}. \end{align*}
Thus, if $M > 2\delta^{-4\eta}$, there exists a pair $X_{i_{1}},X_{i_{2}}$ with $i_{1} \neq i_{2}$ such that $\mu(X_{i_{1}} \cap X_{i_{2}}) > \delta^{4\eta}/2$, and the algorithm has already terminated earlier. This proves \eqref{form16}.

With the sets $X_{1},\ldots,X_{M}$ now defined, write
\begin{displaymath} B_{d}'(X_{j}) := \{x \in E_{k} : \exists \, X' \subset X_{j} \text{ with } \mu(X') > \delta^{4\eta}/2 \text{ and } p \in B_{d}(X')\}.  \end{displaymath}
I claim that
\begin{equation}\label{form17} \textbf{Bad}_{k}^{d} \subset \bigcup_{j = 1}^{M} B_{d}'(X_{j}). \end{equation}
Indeed, if $x \in \textbf{Bad}_{k}^{d}$, then $x \in B_{d}(X)$ for some $X \subset K_{k}$ with $\mu(X) > \delta^{2\eta}$ by \eqref{form35}. It follows that 
\begin{equation}\label{form18} \mu(X \cap X_{j}) > \delta^{4\eta}/2 \end{equation}
for one of the sets $X_{j}$, $1 \leq j \leq M$, because either $X \in \{X_{1},\ldots,X_{M}\}$, and \eqref{form18} is clear (all the sets $X_{j}$ even satisfy $\mu(X_{j}) > \delta^{2\eta}$), or else \eqref{form18} must hold by virtue of $X$ \textbf{not} having been added to the list $X_{1},\ldots,X_{M}$ in the algorithm. But \eqref{form18} implies that $x \in B_{d}'(X_{j})$, since $X' = X \cap X_{j} \subset X_{j}$ satisfies $\mu(X') > \delta^{4\eta}/2$ and $x \in B_{d}(X) \subset B_{d}(X')$.

According to \eqref{form16} and \eqref{form17} the set $\textbf{Bad}_{k}^{d}$ can be covered by $M \leq 2\delta^{-4\eta}$ sets of the form $B_{d}'(X_{j})$, see Figure \ref{fig2}. These sets are the "flowers", and their structure is explored in the next lemma:
\begin{lemma}\label{auxLemma3} The following holds, if $\delta = \delta_{k + 1}$ and $\eta > 0$ are small enough (the latter depending on $\kappa_{\mu},\tau$ here). For $1 \leq d \leq D$ and $1 \leq j \leq M$ fixed, the set $B_{d}'(X_{j})$ can be covered by $\leq 4\delta^{-8\eta}$ tubes of the form $T = T(\ell,\delta^{\rho})$, where $\dir(T) \in J_{d}$, and $\rho = \rho(\kappa_{\mu},\tau) > 0$. The tubes can be chosen to contain the point $x_{j} \in B_{d}(X_{j})$.
\end{lemma}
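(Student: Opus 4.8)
\subsection*{Proposal for the proof of Lemma \ref{auxLemma3}}

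The plan is to exploit that the witness chunks "seen with low complexity'' from $x$ are large, hence (by the Frostman condition on $\mu$ and the hypothesis $\tau < \kappa_{\mu}/2$) spread out over a definite scale, which forces the relevant tubes through $x$ and through $x_{j}$ to be almost parallel. Fix $x \in B_{d}'(X_{j})$ with a witness $X' \subset X_{j}$, $\mu(X') > \delta^{4\eta}/2$, such that $X'$ is covered by $N \leq \delta^{-\tau}$ tubes $T_{1},\ldots,T_{N}$ of width $2\delta$ through $x$ with directions in $J_{d}$. Since $x_{j} \in B_{d}(X_{j})$, the larger set $X_{j} \supset X'$ is also covered by $\leq \delta^{-\tau}$ tubes of width $2\delta$ through $x_{j}$ with directions in $J_{d}$. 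Intersecting these two coverings partitions $X'$ into $\leq \delta^{-2\tau}$ pieces, each contained in a single tube $T_{x} = T(\ell,\delta)$ through $x$ \emph{and} a single tube $T_{x_{j}} = T(\ell_{j},\delta)$ through $x_{j}$, both with direction in $J_{d}$. Pigeonholing, one of these pieces, say $Z = Z(x)$, carries mass $\mu(Z) \geq \tfrac{1}{2}\delta^{4\eta + 2\tau}$.

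The Frostman bound $\mu(B(\cdot,r)) \leq C_{\mu}r^{\kappa_{\mu}}$ then gives $\diam Z \gtrsim \delta^{(4\eta + 2\tau)/\kappa_{\mu}}$, and since $\tau < \kappa_{\mu}/2$, the exponent $(4\eta + 2\tau)/\kappa_{\mu}$ stays bounded away from $1$ for $\eta$ small; set $\rho := \tfrac12\bigl(1 - \tfrac{2\tau}{\kappa_{\mu}}\bigr) > 0$, which depends only on $\kappa_{\mu},\tau$, and note $1 - (4\eta + 2\tau)/\kappa_{\mu} \geq \rho$ once $\eta$ is small. Picking $p,q \in Z$ with $|p - q| \gtrsim \delta^{1 - \rho}$, the fact that $p,q$ lie within $\delta$ of $\ell$ and within $\delta$ of $\ell_{j}$ forces both $\dir(\ell)$ and $\dir(\ell_{j})$ to agree with $\dir(\overline{pq})$ up to $\lesssim \delta/|p-q| \lesssim \delta^{\rho}$; in particular $|\dir(\ell) - \dir(\ell_{j})| \lesssim \delta^{\rho}$. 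By the same Frostman bound, $Z$ contains a point $z_{0}$ with $|z_{0} - x_{j}| \gtrsim \delta^{1 - \rho}$ (delete from $Z$ a ball about $x_{j}$ of $\mu$-mass $\tfrac14\delta^{4\eta + 2\tau}$), and then $\dir(\overline{x_{j}z_{0}})$ agrees with $\dir(\ell_{j})$, hence with $\dir(\ell)$, up to $\lesssim \delta^{\rho}$. Since $x$ and $z_{0}$ both lie within $\delta$ of the line $\ell$, while $z_{0}$ lies on the line $\overline{x_{j}z_{0}}$ which has nearly the same direction, the two lines stay within $\lesssim \delta^{\rho}$ of each other throughout $B(0,1)$; therefore $x$ lies in the tube $T(\overline{x_{j}z_{0}}, C\delta^{\rho})$, which passes through $x_{j}$ and whose direction lies in an $O(\delta^{\rho})$-enlargement of $J_{d}$ (harmlessly replaced by $J_{d}$, absorbing the constant $C$ into a slightly smaller $\rho$ and using that $\delta$ is small).

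It remains to bound the number of tubes. The tube produced for $x$ depends on $x$ only through the direction $\theta(x) := \dir(\overline{x_{j}z_{0}(x)}) \in J_{d}$. If $\theta(x)$ and $\theta(x')$ differ by $\gtrsim \delta^{\rho}$, then after removing the $\mu$-negligible balls of radius $\sim \delta^{1 - \rho}$ about $x_{j}$, the surviving parts of $Z(x)$ and $Z(x')$ lie in disjoint width-$2\delta$ tubes through $x_{j}$, hence are disjoint subsets of $X_{j}$, each of $\mu$-mass $\gtrsim \delta^{4\eta + 2\tau}$; since $\mu(X_{j}) \leq 1$, there can be at most $\lesssim \delta^{-(4\eta + 2\tau)}$ pairwise $\delta^{\rho}$-separated values of $\theta(x)$, so by a greedy argument all of them are covered by that many intervals of length $\sim \delta^{\rho}$. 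Each interval yields one tube of width $\sim \delta^{\rho}$ through $x_{j}$ with direction in $J_{d}$ (adjust to width exactly $\delta^{\rho}$ as above), and these cover $B_{d}'(X_{j})$. Tracking the various $\delta^{O(\eta)}$-losses and choosing $\eta$ sufficiently small relative to $\kappa_{\mu},\tau$ (consistent with $\eta = \eta(\epsilon,\kappa_{\mu},\kappa_{\nu},\tau)$ in Lemma \ref{mainLemma}) brings the count to at most $4\delta^{-8\eta}$; points $x$ with $|x - x_{j}| \leq \delta^{\rho}$ are automatically inside any such tube, so they need no special treatment.

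The step I expect to be the crux is the near-parallelism/transversality argument in the second paragraph: one must rule out that the width-$2\delta$ tubes $T_{x}$ and $T_{x_{j}}$ cross transversally, and this is precisely where $\tau < \kappa_{\mu}/2$ enters, via the Frostman diameter bound on $Z$ combined with the fact that two width-$\delta$ tubes meeting at angle $\gg \delta^{\rho}$ can only share a set of diameter $\ll \delta^{1-\rho}$. The delicate bookkeeping is to keep $\rho$ depending only on $\kappa_{\mu}$ and $\tau$ while the small parameter $\eta$ is still free, and to arrange that the final count of tubes is genuinely a fixed power of $\delta^{-\eta}$ rather than of $\delta^{-\tau}$; this forces $\eta$ to be chosen small at the end, which is exactly the flexibility allowed by the statement.
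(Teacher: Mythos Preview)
Your geometric argument (second paragraph) is a valid and in some ways more direct alternative to the paper's ``covering complexity'' trick: the paper pigeonholes only over the tubes through $x$, finds one tube $T_{x}$ with $\mu(X_{j} \cap T_{x}) \gtrsim \delta^{8\eta + \tau}$, and then argues that any point of $B_{d}(X_{j})$ must lie in a $\delta^{4\rho}$-neighbourhood of $T_{x}$ because covering $X_{j} \cap T_{x}$ from a transversal direction would cost more than $\delta^{-\tau}$ tubes. Your route via the common piece $Z \subset T_{x} \cap T_{x_{j}}$ and the Frostman diameter bound reaches the same conclusion with a different (and perfectly acceptable) value of $\rho$.

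However, your counting argument in the third paragraph does not deliver the bound $4\delta^{-8\eta}$. The disjoint pieces $Z(x)$ you produce have mass only $\gtrsim \delta^{4\eta + 2\tau}$, so packing them into $X_{j}$ gives at most $\lesssim \delta^{-(4\eta + 2\tau)}$ separated directions. The exponent $2\tau$ is a \emph{fixed} positive number (possibly close to $\kappa_{\mu}$), not an $O(\eta)$ loss, and no choice of small $\eta$ will turn $\delta^{-2\tau}$ into $\delta^{-8\eta}$; the sentence ``tracking the various $\delta^{O(\eta)}$-losses \ldots\ brings the count to at most $4\delta^{-8\eta}$'' is therefore incorrect. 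The bound $4\delta^{-8\eta}$ is not cosmetic: it is exactly what is needed later so that the total number of covering tubes for $\mathbf{Bad}_{k}^{d}$ stays at $\delta^{-O(\eta)}$, which feeds into the estimate for $\nu(\mathbf{BadBad}_{k})$.

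The paper fixes this by \emph{not} pigeonholing down to pieces of mass $\delta^{4\eta + 2\tau}$ for the count. Instead it runs a greedy selection on the full witness sets: choose $y_{1},y_{2},\ldots \in B_{d}'(X_{j})$ with associated witnesses $X_{i}' \subset X_{j}$, $\mu(X_{i}') > \delta^{4\eta}/2$, subject to $\mu(X_{i_{1}}' \cap X_{i_{2}}') \leq \delta^{8\eta}/4$ for $i_{1} \neq i_{2}$. A Cauchy--Schwarz computation (the same one bounding $M$ earlier) forces this to stop in $H \leq 4\delta^{-8\eta}$ steps. Then, for an arbitrary $y \in B_{d}'(X_{j})$, its witness $X'$ must satisfy $\mu(X' \cap X_{i}') > \delta^{8\eta}/4$ for some $i \leq H$, and one reruns the geometric step with $X'' = X' \cap X_{i}'$ to land $y$ in the $\delta^{\rho}$-tube already built for $y_{i}$. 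The point is that the objects being counted have mass $\gtrsim \delta^{4\eta}$, not $\gtrsim \delta^{4\eta + 2\tau}$, and almost-disjointness (rather than disjointness) is what Cauchy--Schwarz controls. Your proof can be repaired by replacing the $Z(x)$-disjointness count with this selection-and-overlap mechanism.
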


\begin{proof} Fix $1 \leq j \leq M$ and $x \in B_{d}'(X_{j})$. Recall the point $x_{j} \in B_{d}(X_{j})$ from the definition of $X_{j}$. By definition of $x \in B_{d}'(X_{j})$, there exists a set $X' \subset X_{j}$ with $\mu(X') > \delta^{4\eta}/2$ and $x \in B_{d}(X')$. Unwrapping the definitions further, there exist $N \leq \delta^{-\tau}$ tubes $T(\ell_{1},\delta),\ldots,T(\ell_{N},\delta)$, the union of which covers $X'$, and each satisfies $\dir(T(\ell_{i},\delta)) \in J_{d}$ and $x \in T(\ell_{i},\delta)$. In particular, one of these tubes, say $T_{x} = T(\ell_{i},\delta)$, has
\begin{equation}\label{form19} \mu(X_{j} \cap T_{x}) \geq \mu(X' \cap T_{x}) \geq \mu(X') \cdot \delta^{\tau} \geq \delta^{4\eta + \tau}/2 \geq \delta^{8\eta + \tau}/4. \end{equation}
(The final inequality is just a triviality at this point, but is useful for later technical purposes later.) Here comes perhaps the most basic geometric observation in the proof: if the measure lower bound \eqref{form19} holds for some $\delta$-tube $T$ -- this time $T_{x}$ --  and a sufficiently small $\eta > 0$ (crucially so small that $8\eta + \tau < \kappa_{\mu}/2$), then the whole set $B_{d}(X_{j})$ is actually contained in a neighbourhood of $T$, called $T^{\ast}$, because $X_{j} \cap T$ is so difficult to cover by $\delta$-tubes centred at points outside $T^{\ast}$, see Figure \ref{fig3}. \begin{figure}[h!]
\begin{center}
\includegraphics[scale = 0.4]{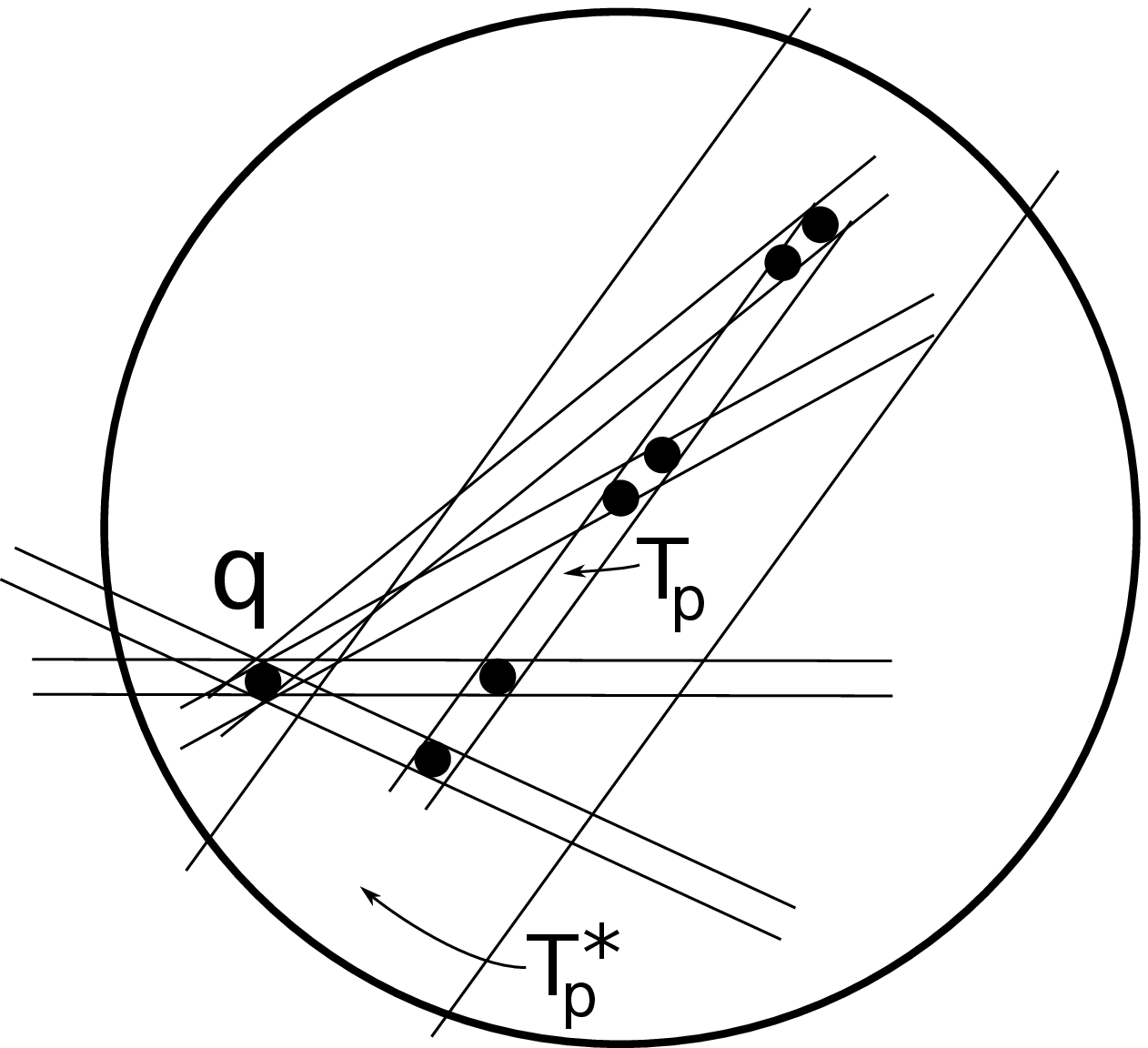}
\caption{Covering $X_{j} \cap T_{x}$ by tubes centred at points outside $T_{x}^{\ast}$.}\label{fig3}
\end{center}
\end{figure}
In particular, in the present case,
\begin{equation}\label{form36} x_{j} \in B_{d}(X_{j}) \subset T(\ell_{i},\delta^{4\rho}) =: T_{x}^{\ast} \end{equation}
for a suitable constant $\rho = \rho(\kappa_{\mu},\tau) > 0$, specified in \eqref{form33}. To see this formally, pick $y \in B(0,1) \setminus T_{x}^{\ast}$, and argue as follows to show that $y \notin B_{d}(X_{j})$. First, any $\delta$-tube $T$ containing $y$, and intersecting $T_{x} \cap B(0,1)$, makes an angle of at least $\gtrsim \delta^{4\rho}$ with $T_{x}$. It follows that
\begin{displaymath} \diam(T \cap T_{x} \cap B(0,1)) \lesssim \delta^{1 - 4\rho}, \end{displaymath}
and consequently $\mu(T \cap T_{x} \cap B(0,1)) \lesssim C_{\mu}\delta^{\kappa_{\mu}(1 - 4\rho)}$. So, in order to cover $X_{j} \cap T_{x}$ (let alone the whole set $X_{j}$) it takes by \eqref{form19} at least
\begin{equation}\label{form20} \gtrsim \frac{\mu(X_{j} \cap T_{x})}{C_{\mu}\delta^{\kappa_{\mu}(1 - 4\rho)}} \geq \frac{\delta^{8\eta + \tau - \kappa_{\mu}(1 - 4\rho)}}{4C_{\mu}} \geq \frac{\delta^{8\eta - \kappa_{\mu}/2 + 8\rho}}{4C_{\mu}} \end{equation}
tubes $T$ containing $y$. But if 
\begin{equation}\label{form33} 0 < 8\eta < \frac{\tfrac{\kappa_{\mu}}{2} - \tau}{2} \quad \text{and} \quad 8\rho = \frac{\tfrac{\kappa_{\mu}}{2} - \tau}{2}, \end{equation}
then the number on the right hand side of \eqref{form20} is far larger than $\delta^{-\tau}$, which means that $y \notin B_{d}(X_{j})$, and proves \eqref{form36}. 

Recall the statement of the Lemma \ref{auxLemma3}, and compare it with the previous accomplishment: \eqref{form36} states that whenever $x \in B'_{d}(X_{j})$, then $x$ lies in a certain tube of width $\delta^{4\rho}$ (namely $T_{x}$), which has direction in $J_{d}$, and also contains $x_{j}$. This sounds a bit like the statement of the lemma, but there is a problem: in principle, every point $x \in B'(X_{j})$ could give rise to a different tube $T_{x}$. So, it essentially remains to show that all these $\delta^{4\rho}$-tubes $T_{x}$ can be covered by a small number of tubes of width $\delta^{\rho}$. To begin with, note that the ball $B_{j} := B(x_{j},\delta^{2\rho})$ can be covered by a single tube of width $\delta^{\rho}$, in any direction desired. So, to prove the lemma, it remains to cover $B_{d}'(X_{j}) \setminus B_{j}$. 

Note that if $x,y$ satisfy $|x - y| \geq \delta^{2\rho}$, then the direction of any $\delta^{4\rho}$-tube containing both $x,y$ lies in a fixed arc $J(x,y) \subset S$ of length $|J(x,y)| \lesssim \delta^{4\rho}/\delta^{2\rho} = \delta^{2\rho}$. As a corollary, the union of all $\delta^{4\rho}$-tubes containing $x,y$, intersected with $B(0,1)$, is contained in a single tube of width $\sim \delta^{2\rho}$. In particular, this union (still intersected with $B(0,1)$) is contained in a single $\delta^{\rho}$-tube, assuming that $\delta > 0$ is small; this tube can be chosen to be a $\delta^{\rho}$-tube around an arbitrary $\delta^{4\rho}$-tube containing both $x$ and $y$.

The tube-cover of $B_{d}'(X_{j}) \setminus B_{j}$ can now be constructed by adding one tube at a time. First, assume that there is a point $y_{1} \in B_{d}'(X_{j}) \setminus B_{j}$ left to be covered, and find a tube $T(\ell_{1},\delta^{4\rho})$ containing both $y_{1}$ and $x_{j}$, with direction in $J_{d}$; existence follows from \eqref{form36}. Add the tube $T(\ell_{1},\delta^{\rho})$ to the the tube-cover of $B_{d}'(X_{j}) \setminus B_{j}$, and recall from the previous paragraph that $T(\ell_{1},\delta^{\rho})$ now contains $T \cap B(0,1)$ for \textbf{any} $\delta^{4\rho}$-tube $T \supset \{y_{1},x_{j}\}$ (of which $T = T(\ell_{1},\delta^{4\rho})$ is just one example). Finally, by definition of $y_{1} \in B_{d}'(X_{j})$, associate to $y_{1}$ a subset $X_{1}' \subset X_{j}$ with 
\begin{equation}\label{form37} \mu(X_{1}') > \delta^{4\eta}/2 \quad \text{and} \quad y_{1} \in B_{d}(X_{1}'). \end{equation}

Assume that the points $y_{1},\ldots,y_{H} \in B_{d}'(X_{j}) \setminus B_{j}$, along with the associated tubes $\{y_{i},x_{j}\} \subset T(\ell_{i},\delta^{4\rho}) \subset T(\ell_{i},\delta^{\rho})$, and subsets $X_{i}' \subset X_{j}$, as in \eqref{form37}, have already been constructed. Assume inductively that 
\begin{equation}\label{form21} \mu(X_{i_{1}}' \cap X_{i_{2}}') \leq \delta^{8\eta}/4, \qquad 1 \leq i_{1} < i_{2} \leq H. \end{equation}
To proceed, pick any point $y_{H + 1} \in B_{d}'(X_{j}) \setminus B_{j}$, and associate to $y_{H + 1}$ a subset $X_{H + 1}' \subset X_{j}$ with $\mu(X_{H + 1}') > \delta^{4\rho}/2$ and $y_{H + 1} \in B_{d}(X_{H + 1}')$. Then, test whether \eqref{form21} still holds, that is, whether $\mu(X_{H + 1}' \cap X_{i}') \leq \delta_{k + 1}^{8\eta}/4$ for all $1 \leq i \leq H$. If such a point $y_{H + 1}$ can be chosen, run the argument from the previous paragraph, first locating a tube $T(\ell_{H + 1},\delta^{4\rho})$ containing both $y_{H + 1}$ and $p_{j}$, with direction in $J_{d}$, and finally adding $T(\ell_{H + 1},\delta^{\rho})$ to the tube-cover under construction. 

The "competing" conditions $\mu(X_{i}') > \delta^{4\eta}/2$, and \eqref{form21}, guarantee that the the algorithm terminates in 
\begin{displaymath} H \leq 4\delta^{-8\eta} \end{displaymath}
steps. The argument is precisely the same as used to prove \eqref{form16}, so I omit it. Once the algorithm has terminated, I claim that all points of $B_{d}'(X_{j}) \setminus B_{j}$ are covered by the tubes $T(\ell_{i},\delta^{\rho})$, with $1 \leq i \leq H$. To see this, pick $y \in B_{d}'(X_{j}) \setminus B_{j}$, and a subset $X' \subset X_{j}$ with $\mu(X') > \delta^{4\eta}/2$, and $y \in B_{d}(X')$. Since the algorithm had already terminated, it must be the case that 
\begin{displaymath} \mu(X' \cap X_{i}') > \delta^{8\eta}/4 \end{displaymath}
for some index $1 \leq i \leq H$. Since $X'' := X' \cap X_{i}' \subset X'$ and consequently $y \in B_{d}(X'')$, one can find a tube $T_{y} = T(\ell_{y},\delta) \ni y$ with $\dir(T_{y}) \in J_{d}$, and satisfying
\begin{displaymath} \mu(X_{i}' \cap T_{y}) \geq \mu(X'' \cap T_{y}) \geq \mu(X'') \cdot \delta^{\tau} > \delta^{8\eta + \tau}/4. \end{displaymath}
This lower bound is precisely the same as in \eqref{form19}. Hence, it follows from the same argument, which gave \eqref{form36}, that
\begin{displaymath} y_{i} \in B_{d}(X_{i}') \subset T(\ell_{y},\delta^{4\rho}). \end{displaymath}
Since $X_{i}' \subset X_{j}$, also $x_{j} \in B_{d}(X_{j}) \subset B_{d}(X_{i}') \subset T(\ell_{q},\delta^{4\rho})$. So, 
\begin{equation}\label{form38} \{y,y_{i},x_{j}\} \subset B(0,1) \cap T(\ell_{y},\delta^{4\rho}). \end{equation}
In particular, $T(\ell_{y},\delta^{4\rho})$ is a $\delta^{4\rho}$-tube containing both $y_{i},x_{j}$, and hence 
\begin{displaymath} B(0,1) \cap T(\ell_{y},\delta^{4\rho}) \subset T(\ell_{i},\delta^{\rho}). \end{displaymath}
Combined with \eqref{form38}, this yields $y \in T(\ell_{i},\delta^{\rho})$, as claimed. This concludes the proof of Lemma \ref{auxLemma3}. \end{proof}

Combining \eqref{form16}-\eqref{form17} with Lemma \ref{auxLemma3}, the structural description of $\textbf{Bad}_{k}^{d}$ is now complete: $\textbf{Bad}_{d}^{k}$ is covered by 
\begin{equation}\label{form39} \leq M \cdot 4\delta^{-8\eta} \leq 8\delta^{-12\eta} \end{equation}
tubes of width $\delta^{\rho}$, with directions in $J_{d}$. For non-adjacent $d_{1},d_{2} \in \{1,\ldots,D\}$ (the ordering of indices corresponds to the ordering of the arcs $J_{d} \subset S$), the covering tubes are then fairly transversal. This is can be used to infer that most point in $E_{k}$ do not lie in many different sets $\textbf{Bad}_{k}^{d}$. Indeed, consider the set $\textbf{BadBad}_{k}$ of those points in $\R^{2}$, which lie in (at least) two sets $\textbf{Bad}_{k}^{d_{1}}$ and $\textbf{Bad}_{k}^{d_{2}}$ with $|d_{2} - d_{1}| > 1$. By Lemma \ref{auxLemma3}, such points lie in the intersection of some pair of tubes $T_{1} = T(\ell_{1},\delta^{\rho})$ and $T_{2} = T(\ell_{2},\delta^{\rho})$ with $\dir(T_{i}) \in J_{d_{i}}$. The angle between these tubes is $\gtrsim \delta^{\eta}$, whence
\begin{displaymath} \diam(T_{1} \cap T_{2}) \lesssim \delta^{\rho - \eta}, \end{displaymath}
and consequently
\begin{equation}\label{form22} \nu(T_{1} \cap T_{2}) \lesssim C_{\nu}\delta^{\kappa_{\nu}(\rho - \eta)} \leq C_{\nu}\delta^{\kappa_{\nu}\rho - 2\eta}. \end{equation}
For $d \in \{1,\ldots,D\}$ fixed, there correspond $\lesssim \delta^{-12\eta}$ tubes in total, as pointed out in \eqref{form39}. So, the number of pairs $T_{1},T_{2}$, as above, is bounded by
\begin{displaymath} \lesssim D^{2} \cdot \delta^{-24\eta} \leq \delta^{-26\eta}. \end{displaymath}
Consequently, by \eqref{form22},
\begin{displaymath} \nu(\textbf{BadBad}_{k}) \lesssim C_{\nu} \delta^{-28\eta + \kappa_{\nu}\rho}. \end{displaymath}
This upper bound is far smaller than $\delta_{k}^{\beta}/2 \leq \nu(E_{k})/2$, taking $0 < \max\{\beta,28\eta\} < \kappa_{\nu}\rho/2$, so that
\begin{equation}\label{form25} 0 < \beta < \kappa_{\nu}\rho - 28\eta. \end{equation}
For such choices of $\beta,\eta$, the next task is then to choose $E_{k + 1} \subset E_{k}$ such that $\nu(E_{k + 1}) \geq \delta_{k + 1}^{\beta}$. Start by writing $G_{k} := E_{k} \setminus \textbf{BadBad}_{k}$, so that
\begin{displaymath} \nu(G_{k}) \geq \nu(E_{k})/2 \geq \delta_{k}^{\beta}/2 \end{displaymath}
by the choice of $\beta$. Now, either 
\begin{equation}\label{form24} \nu\left(G_{k} \cap \textbf{Bad}_{k} \right) \geq \frac{\nu(G_{k})}{2} \quad \text{or} \quad \nu\left(G_{k} \cap \textbf{Bad}_{k} \right) < \frac{\nu(G_{k})}{2}. \end{equation}
The latter case is quick and easy: set $E_{k + 1} := G_{k} \setminus \textbf{Bad}_{k}$ and $K_{k + 1} := K_{k}$. Then $\nu(E_{k + 1}) \geq \nu(E_{k})/4 \geq \delta_{k + 1}^{\beta}$ (assuming that $k \geq k_{0}$ is large enough). Moreover, the set $E_{k + 1}$ no longer contains any points in $\textbf{Bad}_{k}$, so \eqref{form1a} is satisfied at level $k + 1$, by the very definition of $\textbf{Bad}_{k}$, see \eqref{form23}.

So, it remains to treat the first case in \eqref{form24}. Start by recalling from \eqref{form14} that $\textbf{Bad}_{k}$ is covered by the sets $\textbf{Bad}_{k}^{d}$, $1 \leq d \leq D$, so
\begin{displaymath} \nu(G_{k} \cap \textbf{Bad}_{k}^{d}) \geq \frac{\nu(G_{k})}{2D} \geq \frac{\delta^{\eta}\delta_{k}^{\beta}}{4} = \frac{\delta^{\eta + \beta/(1 + \epsilon)}}{4} \end{displaymath}
for some fixed $d \in \{1,\ldots,D\}$. Then, recall from \eqref{form39} that $\textbf{Bad}_{k}^{d}$ can be covered by $\leq 8\delta^{-12\eta}$ tubes of the form $T(\ell,\delta^{\rho})$, with directions in $J_{d}$. It follows that there exists a fixed tube $T_{0} = T(\ell_{0},\delta^{\rho})$ such that
\begin{equation}\label{form40} \dir(T_{0}) \in J_{d} \quad \text{and} \quad \nu(G_{k} \cap T_{0} \cap \textbf{Bad}_{k}^{d}) \geq \frac{\delta^{13\eta + \beta/(1 + \epsilon)}}{32}. \end{equation}
So, to ensure $\nu(G_{k} \cap T_{0} \cap \textbf{Bad}_{k}^{d}) \geq \delta^{\beta}$, choose $\eta > 0$ so small that
\begin{equation}\label{form34} 13\eta + \beta/(1 + \epsilon) < \beta. \end{equation} 
To convince the reader that there is no circular reasoning at play, I gather here all the requirements for $\beta$ and $\eta$ (harvested from \eqref{form33}, \eqref{form25}, and \eqref{form34}):
\begin{displaymath} 0 < \beta < \frac{\kappa_{\nu}\rho}{2} \quad \text{and} \quad 0 < \eta < \min\left\{\frac{\kappa_{\mu}/2 - \tau}{2}, \frac{\kappa_{\nu}\rho}{56},\frac{\epsilon \beta}{13(1 + \epsilon)} \right\} \end{displaymath} 
With such choices of $\beta,\eta$, recalling \eqref{form40}, and assuming that $\delta$ is small enough, the set
\begin{displaymath} E_{k + 1} := G_{k} \cap T_{0} \cap \textbf{Bad}_{k}^{d}. \end{displaymath}
satisfies $\nu(E_{k + 1}) \geq \delta^{\beta}$, which is statement (b) from the lemma. It remains to define $K_{k + 1}$. To this end, recall that $T_{0}$ is a tube around the line $\ell_{0} \subset \R^{2}$. Define
\begin{displaymath} K_{k + 1} := K_{k} \setminus T(\ell_{0},\delta^{\eta/2}). \end{displaymath}
Then, assuming that $\eta/2$ has the form $\eta/2 = (1 + \epsilon)^{-\Gamma - 1}$ for an integer $\Gamma = \Gamma(\epsilon,\kappa_{\mu},\kappa_{\nu},\tau) \in \N$ (this is finally the integer from \eqref{form27}), one has
\begin{equation}\label{form32} \delta^{\eta/2} = \delta_{k - \Gamma}. \end{equation}
Since $T(\ell_{0},\delta_{k - \Gamma}) \cap E_{k - \Gamma} \neq \emptyset$, it follows from the induction hypothesis (i) that
\begin{displaymath} \mu(K_{k} \cap T(\ell_{0},\delta_{k - \Gamma})) \leq (\tfrac{1}{4})^{k - k_{0} + 1}. \end{displaymath} 
Consequently,
\begin{displaymath} \mu(K_{k + 1}) \geq \mu(K_{k}) - (\tfrac{1}{4})^{k - k_{0} + 1} \geq 1 - \sum_{k_{0} \leq j < k + 1} (\tfrac{1}{4})^{j - k_{0} + 1}, \end{displaymath} 
which is the desired lower bound from (a) of the statement of the lemma. So, it remains to verify the non-concentration condition \eqref{form1a} for $E_{k + 1}$ and $K_{k + 1}$. To this end, pick $x \in E_{k + 1}$. First, observe that every tube $T = T(\ell,\delta)$, which contains $x$ and has non-empty intersection with $K_{k + 1} \subset B(0,1) \setminus T(\ell,\delta^{\eta/2})$, forms an angle $\gtrsim \delta^{\eta/2}$ with $T_{0}$. In particular, this angle is far larger than $\delta^{\eta}$. Since $\dir(T_{0}) \in J_{d}$ by \eqref{form40}, this implies that $\dir(T) \in J_{d'}$ for some $|d' - d| > 1$. 

Now, if the non-concentration condition \eqref{form1a} still failed for $x \in E_{k + 1}$, there would exist $N \leq \delta^{-\tau}$ tubes $T(\ell_{1},\delta),\ldots,T(\ell_{N},\delta)$, each containing $x$, and with
\begin{displaymath} \mu\left(K_{k + 1} \cap \bigcup_{i = 1}^{N} T(\ell_{i},\delta) \right) > \delta^{\eta}. \end{displaymath}
By the pigeonhole principle, it follows that the tubes $T(\ell_{i},\delta)$ with $\dir(T_{i}) \in J_{d'}$, for some fixed arc $J_{d'}$, cover a set $X \subset K_{k + 1} \subset K_{k}$ of measure $\mu(X) > \delta^{2\eta}$. This means precisely that $x \in \textbf{Bad}_{k}^{d'}$, and by the observation in the previous paragraph, $|d - d'| > 1$. But $x \in E_{k + 1} \subset \textbf{Bad}_{k}^{d}$ by definition, so this would imply that $x \in \textbf{BadBad}_{k}$, contradicting the fact that $x \in E_{k + 1} \subset G_{k}$. This completes the proof of \eqref{form1a}, and the lemma. \end{proof}

The proof of Theorem \ref{main} is now quite standard:

\begin{proof}[Proof of Theorem \ref{main}] Write $s := \Hd K$, and assume that $s > 0$ and $\Hd E > 0$. Make a counter assumption: $E$ is not contained on a line, but $\Hd \pi_{x}(K) < s/2$ for all $x \in E$. Then, find $t < s/2$, and a positive-dimensional subset $\tilde{E} \subset E$, not contained on any single line, with $\Hd \pi_{x}(K) \leq t$ for all $x \in \tilde{E}$ (if your first attempt at $\tilde{E}$ lies on some line $\ell$, simply add a point $x_{0} \in E \setminus \ell$ to $\tilde{E}$, and replace $t$ by $\max\{t,\Hd \pi_{x_{0}}(K)\} < s/2$). So, now $\tilde{E}$ satisfies the same hypotheses as $E$, but with "$< s/2$" replaced by "$\leq t < s/2$". Thus, without loss of generality, one may assume that 
\begin{equation}\label{form30} \Hd \pi_{x}(K) \leq t < s/2, \qquad x \in E. \end{equation}

Using Frostman's lemma, pick probability measures $\mu,\nu$ with $\spt \mu \subset K$ and $\spt \nu \subset E$, and satisfying the growth bounds \eqref{frostmanBound} with exponents $0 < \kappa_{\mu} < s$ and $\kappa_{\nu} > 0$. Pick, moreover, $\kappa_{\mu}$ so close to $s$ that 
\begin{equation}\label{form28} \kappa_{\mu}/2 > t. \end{equation}
Observe that $\mu(\ell) = 0$ for all lines $\ell \subset \R^{2}$. Indeed, if $\mu(\ell) > 0$ for some line $\ell \subset \R^{2}$, then there exists $x \in E \setminus \ell$ by assumption, and 
\begin{displaymath} \Hd \pi_{x}(K) \geq \Hd \pi_{x}(\spt \mu \cap \ell) \geq \kappa_{\mu} > t, \end{displaymath}
violating \eqref{form30} at once. Finally, by restricting the measures $\mu$ and $\nu$ slightly, one may assume that they have disjoint supports. 

In preparation for using Lemma \ref{mainLemma}, fix $\epsilon > 0$, $0 < \tau < \kappa_{\mu}/2$ in such a way that
\begin{equation}\label{form29} \frac{\tau}{(1 + \epsilon)^{2}} > t. \end{equation}
This is possible by \eqref{form28}. Then, apply Lemma \ref{mainLemmaSimplified} to find the set $K' \subset \spt \mu \subset K$ with 
\begin{displaymath} \mu(K') \geq \frac{1}{2}, \end{displaymath}
the parameters $\eta > 0$ and $k_{0} \in \N$, and the point $x \in E$ satisfying \eqref{form1b}. I claim that 
\begin{equation}\label{form31} \Hd \pi_{x}(K') \geq \frac{\tau}{(1 + \epsilon)^{2}}, \end{equation}
which violates \eqref{form30} by \eqref{form29}. If not, cover $\pi_{x}(K)$ efficiently by arcs $J_{1},J_{2},\ldots$ of lengths restricted to the values $\delta_{k} = 2^{-(1 + \epsilon)^{k}}$, with $k \geq k_{0}$. More precisely: assuming that \eqref{form31} fails, start with an arbitrary efficient cover $\tilde{J}_{1},\tilde{J}_{2},\ldots$ by arcs of length $|\tilde{J}_{i}| \leq \delta_{k_{0}}$, satisfying
\begin{displaymath} \sum_{j \geq 1} |\tilde{J}_{j}|^{\tau/(1 + \epsilon)^{2}} \leq 1. \end{displaymath} 
Then, replace each $\tilde{J}_{j}$ by the shortest concentric arc $J_{j} \supset \tilde{J}_{j}$, whose length is of the form $\delta_{k}$. Note that $\ell(J_{j}) \leq \ell(\tilde{J}_{j})^{1/(1 + \epsilon)}$, so that
\begin{displaymath} \sum_{j \geq 1} |J_{j}|^{\tau/(1 + \epsilon)} \leq \sum_{j \geq 1} |\tilde{J}_{j}|^{\tau/(1 + \epsilon)^{2}} \leq 1. \end{displaymath}
The arcs $J_{1},J_{2},\ldots$ now cover $\pi_{x}(K')$, and there are $\leq \delta_{k}^{-\tau/(1 + \epsilon)}$ arcs of any fixed length $\delta_{k}$. Since $x \notin K'$, for every $k \geq k_{0}$ there exists a collection of tubes $\calT_{k}$ of the form $T(\ell,\delta_{k}) \ni x$, such that $|\calT_{k}| \lesssim \delta_{k}^{-\tau/(1 + \epsilon)}$ (the implicit constant depends on $\dist(x,K')$), and
\begin{displaymath} K' \subset \bigcup_{k \geq k_{0}} \bigcup_{T \in \calT_{k}} T. \end{displaymath}
In particular $|\calT_{k}| \leq \delta_{k}^{-\tau}$, assuming that $\delta_{k}$ is small enough for all $k \geq k_{0}$. Recall that $\mu(K') \geq \tfrac{1}{2}$. Hence, by the pigeonhole principle, one can find $k \in \N$ such that the following holds: there is a subset $K_{k}' \subset K'$ with $\mu(K_{k}') \geq \tfrac{1}{100k^{2}}$ such that $K_{k}'$ is covered by the tubes in $\calT_{k}$. But $1/(100k^{2})$ is far larger than $\delta_{k}^{\eta}$, so this is explicitly ruled out by non-concentration estimate \eqref{form1b}. This contradiction completes the proof. \end{proof}
 
\section{Proof of Theorem \ref{mainRadial}} This section contains the proof of Theorem \ref{mainTechnical}, which evidently implies Theorem \ref{mainRadial}. Fix $\mu \in \calM(\R^{d})$ and $x \in \R^{d} \setminus \spt \mu$. For a suitable constant $c_{d} > 0$ to be determined shortly, consider the weighted measure
\begin{displaymath} \mu_{x} := c_{d}k_{x} \, d\mu, \end{displaymath}
where $k_{x} := |x - y|^{1 - d}$ is the $(d - 1)$-dimensional Riesz kernel, translated by $x$. A main ingredient in the proof of Theorem \ref{mainTechnical} is the following identity:

\begin{lemma}\label{lem1} Let $\mu \in C_{0}(\R^{d})$ (that is, $\mu$ is a continuous function with compact support) and $\nu \in \calM(\R^{d})$. Assume that $\spt \mu \cap \spt \nu = \emptyset$. Then, for $p \in (0,\infty)$,
\begin{displaymath} \int \|\pi_{x\sharp} \mu_{x}\|_{L^{p}(S^{d - 1})}^{p} \, d\nu(x) = \int_{S^{d - 1}} \|\pi_{e\sharp} \mu\|_{L^{p}(\pi_{e\sharp}\nu)}^{p} \, d\calH^{d - 1}(e). \end{displaymath}
Here, and for the rest of the paper, $\pi_{e}$ stands for the orthogonal projection onto $e^{\perp} \in G(d,d - 1)$.
\end{lemma}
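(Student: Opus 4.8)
The plan is to verify the identity by writing out both sides as integrals over $\R^d \times \R^d$ (more precisely over the support of $\mu$ paired with the support of $\nu$) and checking they coincide. The key observation is the elementary geometric fact relating radial projections from a point to orthogonal projections onto hyperplanes: for a fixed $x$ and $e \in S^{d-1}$, the level set $\{y : \pi_x(y) = e\}$ is (part of) the ray from $x$ in direction $e$, and as $e$ varies over $S^{d-1}$ these rays sweep out $\R^d \setminus \{x\}$. First I would make sense of $\|\pi_{x\sharp}\mu_x\|_{L^p(S^{d-1})}^p$: since $\mu$ is a continuous compactly supported function, disjoint from $\spt\nu$ so that $x$ stays away from $\spt\mu$, the pushforward $\pi_{x\sharp}\mu_x$ has a density on $S^{d-1}$ which can be computed by the coarea/disintegration formula. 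The role of the normalizing constant $c_d$ and the Riesz kernel $k_x(y) = |x-y|^{1-d}$ is precisely to cancel the Jacobian factor $|x-y|^{d-1}$ coming from writing Lebesgue measure near $y$ in polar coordinates centred at $x$; so the density of $\pi_{x\sharp}\mu_x$ at $e \in S^{d-1}$ is $\int_0^\infty \mu(x + te) \, dt$ (with the right choice of $c_d$), i.e. the $X$-ray transform of $\mu$ at the line through $x$ in direction $e$.

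Next I would compute the left-hand side. Writing $f_x(e) := \tfrac{d(\pi_{x\sharp}\mu_x)}{d\calH^{d-1}}(e)$, we get
\begin{displaymath}
\int \|\pi_{x\sharp}\mu_x\|_{L^p(S^{d-1})}^p \, d\nu(x) = \int \int_{S^{d-1}} f_x(e)^p \, d\calH^{d-1}(e) \, d\nu(x),
\end{displaymath}
and then swap the order of integration (Tonelli, everything nonnegative) to get $\int_{S^{d-1}} \int f_x(e)^p \, d\nu(x) \, d\calH^{d-1}(e)$. For the right-hand side, the crucial identification is that the density $f_x(e)$ depends on $x$ only through the hyperplane $x + e^\perp$ that $x$ lies in — equivalently through $\pi_e(x)$, the orthogonal projection of $x$ onto $e^\perp$ — because the $X$-ray transform $\int_0^\infty \mu(x+te)\,dt$ along a full line in direction $e$ is determined by which line it is, and the line through $x$ in direction $e$ is labelled by $\pi_e(x)$. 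Thus $f_x(e) = g_e(\pi_e(x))$ where $g_e$ is exactly the density of $\pi_{e\sharp}\mu$ on $e^\perp$ (again by the standard computation of orthogonal projections of an $L^1$ function: $\pi_{e\sharp}\mu$ has density equal to the line integrals of $\mu$ perpendicular to $e^\perp$). Then $\int f_x(e)^p \, d\nu(x) = \int g_e(\pi_e(x))^p \, d\nu(x) = \int g_e^p \, d(\pi_{e\sharp}\nu) = \|\pi_{e\sharp}\mu\|_{L^p(\pi_{e\sharp}\nu)}^p$, and integrating over $e \in S^{d-1}$ gives the right-hand side.

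The main obstacle I expect is bookkeeping rather than conceptual: getting the constant $c_d$ right and being careful that the "density" computations are legitimate. The cleanest route is probably to avoid densities for the $L^p$ norm against $\pi_{e\sharp}\nu$ on the right and instead test against arbitrary continuous functions, or simply to note that since $\mu$ is continuous with compact support and $x \notin \spt\mu$, $\pi_{x\sharp}\mu_x$ genuinely is (a continuous function times surface measure) and the coarea formula applies without subtlety — the two potentially delicate points are the behaviour of the polar-coordinate Jacobian (handled by the Riesz kernel by design) and the fact that $\nu$ is only a measure, which is harmless since we never differentiate in the $x$-variable, only integrate. One should also double-check the identification $f_x(e) = g_e(\pi_e(x))$ handles the two rays (direction $e$ and $-e$) from $x$ consistently: if $S^{d-1}$ is used on the left and $G(d,d-1)$ on the right, each unoriented line perpendicular to $e^\perp$ corresponds to two antipodal points of $S^{d-1}$, and one must check the factor of $2$ either cancels or is absorbed into $c_d$; I would resolve this by fixing conventions so that $f_x(e) + f_x(-e)$ is the full line integral, matching $g_e(\pi_e(x))$, and then the $\calH^{d-1}(S^{d-1})$-vs-$\calH^{d-1}(e^\perp)$ accounting is automatic after the change of variables $e \mapsto e^\perp$.
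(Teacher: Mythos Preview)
Your proposal is correct and follows essentially the same route as the paper: compute the density of $\pi_{x\sharp}\mu_x$ via polar coordinates to obtain $f_x(e) = \pi_{e\sharp}\mu(\pi_e(x))$, then swap the $x$- and $e$-integrals and push $\nu$ forward by $\pi_e$. The one small difference is that the paper first assumes $\nu \in C_0(\R^d)$ and uses the coarea formula with the density $\nu(x)$, then passes to general $\nu$ by approximation, whereas you bypass this by invoking the change-of-variables identity $\int g_e(\pi_e(x))^p\,d\nu(x)=\int g_e^p\,d(\pi_{e\sharp}\nu)$ directly; your version is slightly cleaner here, and your caution about the antipodal bookkeeping (half-line versus full-line integral, absorbed into $c_d$) is well placed.
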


\begin{proof} Start by assuming that also $\nu \in C_{0}(\R^{d})$. Fix $x \in \R^{d}$. The first aim is to find an explicit expression for the density $\pi_{x} \mu_{x}$ on $S^{d - 1}$, so fix $f \in C(S^{d - 1})$ and compute as follows, using the definition of the measure $\mu_{x}$, integration in polar coordinates, and choosing the constant $c_{d} > 0$ appropriately:
\begin{align*} \int f(e) \, d[\pi_{x\sharp}\mu_{x}](e) & = \int f(\pi_{x}(y)) \, d\mu_{x}(y) = c_{d} \int \frac{f(\pi_{x}(y))}{|x - y|^{d - 1}} \, d\mu(y)\\
& = \int_{S^{d - 1}} f(e) \int_{\R} \mu(x + re) \, dr \, d\calH^{d - 1}(e)\\
& = \int_{S^{d - 1}} f(e) \cdot \pi_{e\sharp}\mu(\pi_{e}(x)) \, d\calH^{d - 1}(e).  \end{align*} 
Since the equation above holds for all $f \in C(S^{d - 1})$, one infers that
\begin{equation}\label{form1} \pi_{x\sharp}\mu_{x} = [e \mapsto \pi_{e\sharp}\mu(\pi_{e}(x))] \, d\calH^{d - 1}|_{S^{d - 1}}. \end{equation}
Now, one may prove the lemma by a straightforward computation, starting with
\begin{align*} \int \|\pi_{x\sharp} \mu_{x}\|_{L^{p}(S^{d - 1})}^{p} \, d\nu(x) & = \int \int_{S^{d - 1}} [\pi_{x\sharp}\mu_{x}(e)]^{p} \, d\calH^{d - 1}(e) \, d\nu(x)\\
& = \int_{S^{d - 1}} \int_{e^{\perp}} \int_{\pi_{e}^{-1}\{w\}} \left[ \pi_{e\sharp}\mu(\pi_{e}(x)) \right]^{p} \nu(x) \, d\calH^{1}(x) \, d\calH^{d - 1}(w) \, d\calH^{d - 1}(e). \end{align*} 
Note that whenever $x \in \pi_{e}^{-1}\{w\}$, then $\pi_{e}(x) = w$, so the expression $[\ldots]^{p}$ above is independent of $x$. Hence,
\begin{align*} \int \|\pi_{x\sharp} \mu_{x}\|_{L^{p}(S^{d - 1})}^{p} \, d\nu(x) & = \int_{S^{d - 1}} \int_{e^{\perp}} \left[\pi_{e\sharp}\mu(w)\right]^{p} \left( \int_{\pi_{e}^{-1}\{w\}} \nu(x) \, d\calH^{1}(x) \right) d\calH^{d - 1}(w) \, d\calH^{1}(e)\\
& = \int_{S^{d - 1}} \int_{e^{\perp}} \left[\pi_{e\sharp}\mu(w)\right]^{p} \pi_{e\sharp}\nu(w) \, d\calH^{d - 1}(w) \, d\calH^{d - 1}(e)\\
& = \int_{S^{d - 1}} \|\pi_{e\sharp} \mu\|_{L^{p}(\pi_{e\sharp}\nu)}^{p} \, d\calH^{d - 1}(e), \end{align*} 
as claimed.

Finally, if $\nu \in \calM(\R^{d})$ is arbitrary, not necessarily smooth, note that 
\begin{displaymath} x \mapsto \|\pi_{x\sharp}\mu_{x}\|_{L^{p}(S^{d - 1})}^{p} \end{displaymath}
is continuous, assuming that $\mu \in C_{0}(\R^{d})$, as we do (to check the details, it is helpful to infer from \eqref{form1} that $\pi_{x}\mu_{x} \in L^{\infty}(S^{d - 1})$ uniformly in $x$, since the projections $\pi_{e\sharp}\mu$ clearly have bounded density, uniformly in $e \in S^{d - 1}$). Thus, if $(\psi_{n})_{n \in \N}$ is a standard approximate identity on $\R^{d}$, one has
\begin{equation}\label{form2} \int \|\pi_{x\sharp}\mu_{x}\|_{L^{p}(S^{d - 1})}^{p} \, d\nu(x) = \lim_{n \to \infty} \int_{S^{d - 1}} \|\pi_{e\sharp} \mu\|_{L^{p}(\pi_{e\sharp}\nu_{n})}^{p} \, d\calH^{d - 1}(e), \end{equation}
with $\nu_{n} = \nu \ast \psi_{n}$. Since $\pi_{e\sharp}\nu_{n}$ converges weakly to $\pi_{e\sharp}\nu$ for any fixed $e \in S^{d - 1}$, and $\pi_{e\sharp}\mu \in C_{0}(e^{\perp})$, it is easy to see that the right hand side of \eqref{form2} equals
\begin{displaymath} \int_{S^{d - 1}} \|\pi_{e\sharp}\mu\|_{L^{p}(\pi_{e\sharp}\nu)}^{p} \, d\calH^{d - 1}(e). \end{displaymath}
This completes the proof of the lemma. \end{proof}

Here is one more (classical) tool required in the proof of Theorem \ref{mainTechnical}:
\begin{lemma}\label{SobolevLemma} Let $0 < \sigma < d/2$, and let $\mu \in \mathcal{M}(\R^{d})$ be a measure with $\spt \mu \subset B(0,1)$ and $I_{d - 2\sigma}(\mu) < \infty$. Then
\begin{displaymath} \|f\|_{L^{1}(\mu)} \lesssim_{d,\sigma} \sqrt{I_{d - 2\sigma}(\mu)}\|f\|_{H^{\sigma}(\R^{d})} \end{displaymath} 
for all continuous functions $f \in H^{\sigma}(\R^{d})$, where
\begin{displaymath} \|f\|_{H^{\sigma}(\R^{d})} := \left(\int |\widehat{f}(\xi)|^{2}|\xi|^{2\sigma} \, d\xi \right)^{1/2}. \end{displaymath}
\end{lemma}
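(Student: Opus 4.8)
The plan is to dualise. The inequality says that $\mu$ induces a bounded linear functional on $H^{\sigma}(\R^{d})$ of norm $\lesssim \sqrt{I_{d-2\sigma}(\mu)}$, and the absolute value around $f$ is handled by replacing $\mu$ with a complex measure of the same total variation. The one classical input I would quote is the Fourier representation of Riesz energy: for $0 < \alpha < d$ and every finite complex measure $\lambda$ on $\R^{d}$,
\[
\int |\widehat{\lambda}(\xi)|^{2}|\xi|^{\alpha - d}\,d\xi = \gamma(d,\alpha)\iint \frac{d\lambda(x)\,d\overline{\lambda}(y)}{|x - y|^{\alpha}}, \qquad \gamma(d,\alpha) > 0
\]
(for $\lambda = \mu \geq 0$ the right-hand side is $\gamma(d,\alpha)I_{\alpha}(\mu)$; see e.g. Mattila's book). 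I would use this with $\alpha = d - 2\sigma \in (0,d)$ — this is exactly where the hypothesis $0 < \sigma < d/2$ is used — together with the observation that if $d\lambda = h\,d\mu$ with $|h| \leq 1$, then, since the Riesz kernel is positive and $|h(x)\overline{h(y)}| \leq 1$, the right-hand side above is bounded in modulus by $\gamma(d,d-2\sigma)\,I_{d-2\sigma}(\mu)$.

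Assume first that $\widehat{f} \in L^{1}(\R^{d})$, so $f(x) = \int \widehat{f}(\xi)e^{2\pi i \xi \cdot x}\,d\xi$ for all $x$. Put $u := f/|f|$ where $f \neq 0$ and $u := 0$ elsewhere, so $|u| \leq 1$ and $|f| = f\overline{u}$ pointwise, and set $\lambda := \overline{u}\,d\mu$, a complex measure with total variation $\leq \mu$. Then $\|f\|_{L^{1}(\mu)} = \int f\,d\lambda$, and Fubini (legitimate since $\widehat{f} \in L^{1}$ and $\lambda$ is finite) gives $\|f\|_{L^{1}(\mu)} = \int \widehat{f}(\xi)\,\widehat{\lambda}(-\xi)\,d\xi$. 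Splitting $1 = |\xi|^{\sigma}\cdot|\xi|^{-\sigma}$ and applying Cauchy--Schwarz,
\[
\|f\|_{L^{1}(\mu)} \leq \Bigl(\int |\widehat{f}(\xi)|^{2}|\xi|^{2\sigma}\,d\xi\Bigr)^{1/2}\Bigl(\int |\widehat{\lambda}(-\xi)|^{2}|\xi|^{-2\sigma}\,d\xi\Bigr)^{1/2} = \|f\|_{H^{\sigma}(\R^{d})}\Bigl(\int |\widehat{\lambda}(\xi)|^{2}|\xi|^{-2\sigma}\,d\xi\Bigr)^{1/2},
\]
after the change of variables $\xi \mapsto -\xi$ in the last integral. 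By the energy identity with $\alpha = d - 2\sigma$ and $\lambda = \overline{u}\,d\mu$, the last factor is $\leq \bigl(\gamma(d,d-2\sigma)\,I_{d-2\sigma}(\mu)\bigr)^{1/2}$, which is the claimed bound.

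It remains to remove the assumption $\widehat{f} \in L^{1}$. Fix a continuous $f$ with $\|f\|_{H^{\sigma}(\R^{d})} < \infty$; since $\spt \mu \subset B(0,1)$, $\mu$ is finite, and $f$ is bounded on $B(0,1)$, we have $\|f\|_{L^{1}(\mu)} < \infty$. Pick $\psi \in C_{c}^{\infty}(\R^{d})$ with $\int \psi = 1$, and set $g_{n} := f \ast \psi_{1/n}$ with $\psi_{1/n}(y) = n^{d}\psi(ny)$. Each $g_{n}$ is continuous and converges to $f$ uniformly on $B(0,1)$, so $\|g_{n}\|_{L^{1}(\mu)} \to \|f\|_{L^{1}(\mu)}$; moreover $|\widehat{g_{n}}| = |\widehat{f}|\,|\widehat{\psi}(\cdot/n)| \leq |\widehat{f}|$, so $\|g_{n}\|_{H^{\sigma}(\R^{d})} \leq \|f\|_{H^{\sigma}(\R^{d})}$. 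Finally $\widehat{g_{n}} \in L^{1}$: since $\widehat{\psi}$ is a Schwartz function,
\[
\int |\widehat{f}(\xi)|\,|\widehat{\psi}(\xi/n)|\,d\xi \leq \|f\|_{H^{\sigma}(\R^{d})}\Bigl(\int |\widehat{\psi}(\xi/n)|^{2}|\xi|^{-2\sigma}\,d\xi\Bigr)^{1/2} < \infty,
\]
the last integral being finite precisely because $2\sigma < d$ makes $|\xi|^{-2\sigma}$ locally integrable. So the case already treated applies to each $g_{n}$, giving $\|g_{n}\|_{L^{1}(\mu)} \leq \sqrt{\gamma(d,d-2\sigma)\,I_{d-2\sigma}(\mu)}\,\|f\|_{H^{\sigma}(\R^{d})}$, and letting $n \to \infty$ completes the proof.

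I expect the only real subtlety to be this last reduction: the seminorm $\|\cdot\|_{H^{\sigma}(\R^{d})}$ is blind to low frequencies (a nonzero constant has vanishing seminorm but positive $L^{1}(\mu)$-mass), so one cannot approximate $f$ by arbitrary Schwartz functions in this seminorm — mollifying $f$ itself, which leaves the low frequencies essentially untouched while forcing $\widehat{g_{n}} \in L^{1}$, is what makes the limit clean. One should also confirm $f$ is a tempered distribution so that $\widehat{f}$ and the above manipulations are meaningful; this follows from the Sobolev embedding $H^{\sigma} \hookrightarrow L^{2d/(d - 2\sigma)}$, valid for $0 < \sigma < d/2$, or one can simply observe that the computation only uses $\widehat{f}$ as an $L^{2}_{\mathrm{loc}}(\R^{d} \setminus \{0\})$ function with $\int |\widehat{f}|^{2}|\xi|^{2\sigma}\,d\xi < \infty$. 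The Fourier-side core of the argument is, by contrast, short and entirely classical.
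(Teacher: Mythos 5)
Your argument is correct, and it is essentially the standard proof of this classical fact. The paper itself does not prove the lemma: it only cites Theorem 17.3 of Mattila's book (which controls the maximal function $\widetilde{M}f$, hence $|f|$ pointwise for continuous $f$), and the proof there runs along the same lines as yours --- dualising against a unimodular density times $\mu$, passing to the Fourier side, splitting $1 = |\xi|^{\sigma}|\xi|^{-\sigma}$ in Cauchy--Schwarz, and invoking the Fourier representation of the Riesz energy $\int |\widehat{\lambda}(\xi)|^{2}|\xi|^{-2\sigma}\,d\xi = \gamma(d,d-2\sigma)\iint |x-y|^{2\sigma - d}\,d\lambda(x)\,d\overline{\lambda}(y)$. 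Your reduction to $\widehat{f} \in L^{1}$ by mollifying $f$ itself (rather than approximating in the homogeneous seminorm, which, as you note, cannot see constants) is the right way to make the limiting step clean, and the domination $\|g_{n}\|_{H^{\sigma}} \leq \|f\|_{H^{\sigma}}$ together with uniform convergence on $\spt\mu$ closes the argument. Two cosmetic points: take $\psi \geq 0$ so that $|\widehat{\psi}| \leq \widehat{\psi}(0) = 1$, which is what the bound $|\widehat{g_{n}}| \leq |\widehat{f}|$ uses; and, if you want to avoid quoting the energy identity for complex measures, you can either polarise the positive-measure version or simply bound $\int |\widehat{\lambda}|^{2}|\xi|^{-2\sigma}\,d\xi$ directly by the corresponding integral for $|\lambda| \leq \mu$, since the kernel $|x-y|^{2\sigma-d}$ is positive. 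No gaps.
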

\begin{proof} See Theorem 17.3 in \cite{Mat}. Since $f$ is assumed continuous here, $|f|$ is pointwise bounded by the maximal function $\widetilde{M}f$ appearing in \cite[Theorem 17.3]{Mat}.  \end{proof}

\begin{proof}[Proof of Theorem \ref{mainTechnical}] Fix $2(d - 1) - s < t < d - 1$. It suffices to prove that if $\nu \in \calM(\R^{d})$ is a fixed measure with $I_{t}(\nu) < \infty$, and $\spt \mu \cap \spt \nu = \emptyset$, then
\begin{displaymath} \pi_{x\sharp}\mu_{x} \in L^{p}(S^{d - 1}) \qquad \text{for } \nu \text{ a.e. } x \in \R^{d}, \end{displaymath}
whenever
\begin{equation}\label{assumptions} 1 < p \leq \min\left\{2 - \tfrac{t}{(d - 1)},\tfrac{t}{2(d - 1) - s}\right\}. \end{equation}
I will treat the numbers $d,p,s,t$ as "fixed" from now on, and in particular the implicit constants in the $\lesssim$ notation may depend on $d,p,s,t$. Note that the right hand side of \eqref{assumptions} lies in $(1,2)$, so this is a non-trivial range of $p$'s. Fix $p$ as in \eqref{assumptions}. The plan is to show that
\begin{equation}\label{form4} \int \|\pi_{x\sharp}\mu_{x}\|_{L^{p}(S^{d - 1})}^{p} \, d\nu(x) \lesssim I_{t}(\nu)^{1/2p}I_{s}(\mu)^{1/2} < \infty. \end{equation}  
This will be done via Lemma \ref{lem1}, but one first needs to reduce to the case $\mu \in C_{0}(\R^{d})$. Let $(\psi_{n})_{n \in \N}$ be a standard approximate identity on $\R^{d}$, and write $\mu_{n} = \mu \ast \psi_{n}$. Then $\pi_{x\sharp}(\mu_{n})_{x}$ converges weakly to $\pi_{x\sharp}\mu_{x}$ for any fixed $x \in \spt \nu \subset \R^{d} \setminus \spt \mu$:
\begin{displaymath} \int f(e) \, d[\pi_{x\sharp}\mu_{x}(e)] = \lim_{n \to \infty} \int f(e) \, d\pi_{x\sharp}(\mu_{n})_{x}(e), \qquad f \in C(S^{d - 1}). \end{displaymath}
It follows that
\begin{displaymath} \|\pi_{x\sharp}\mu_{x}\|_{L^{p}(S^{d - 1})}^{p} \leq \liminf_{n \to \infty} \|\pi_{x\sharp}(\mu_{n})_{x}\|_{L^{p}(S^{d - 1})}^{p}, \quad x \in \spt \nu, \end{displaymath}
and consequently
\begin{displaymath} \int \|\pi_{x\sharp}\mu_{x}\|_{L^{p}(S^{d - 1})}^{p} \, d\nu(x) \leq \liminf_{n \to \infty} \int \|\pi_{x\sharp}(\mu_{n})_{x}\|_{L^{p}(S^{d - 1})}^{p} \, d\nu(x) \end{displaymath}
by Fatou's lemma. Now, it remains to find a uniform upper bound for the terms on the right hand side; the only information about $\mu_{n}$, which we will use, is that $I_{s}(\mu_{n}) \lesssim I_{s}(\mu)$. With this in mind, I simplify notation by denoting $\mu_{n} := \mu$. For the remainder of the proof, one should keep in mind that $\pi_{e\sharp}\mu \in C_{0}^{\infty}(e^{\perp})$ for $e \in S^{d - 1}$, so the integral of $\pi_{e\sharp}\mu$ with respect to various Radon measures on $e^{\perp}$ is well-defined, and the Fourier transform of $\pi_{e\sharp}\mu$ on $e^{\perp}$ (identified with $\R^{d - 1}$) is a rapidly decreasing function. 

We start by appealing to Lemma \ref{lem1}:
\begin{equation}\label{form3} \int \|\pi_{x\sharp}\mu_{x}\|_{L^{p}(S^{d - 1})}^{p} \, d\nu(x) = \int_{S^{d - 1}} \|\pi_{e\sharp} \mu\|_{L^{p}(\pi_{e\sharp}\nu)}^{p} \, d\calH^{d - 1}(e). \end{equation}
The next task is to estimate the $L^{p}(\pi_{e\sharp}\nu)$-norms of $\pi_{e\sharp}\mu$ individually, for $e \in S^{d - 1}$ fixed. I start by recording the standard fact (see for example the proof of Theorem 9.3 in Mattila's book \cite{Mat2}) that $I_{t}(\pi_{e\sharp}\nu) < \infty$ for $\calH^{d - 1}$ almost every $e \in S^{d - 1}$; I will only consider those $e \in S^{d - 1}$ satisfying this condition. Recall that $1 < p \leq t/[2(d - 1) - s]$. Fix $f \in L^{q}(\pi_{e\sharp}\nu)$, with $q = p'$ and $\|f\|_{L^{q}(\pi_{e\sharp}\nu)} = 1$, and note that
\begin{displaymath} I_{2(d - 1) - s}(f \, d\pi_{e\sharp}\nu) = \iint \frac{f(x)f(y) \, d\pi_{e\sharp}\nu(x) \, d\pi_{e\sharp}\nu(y)}{|x - y|^{2(d - 1) - s}} \lesssim I_{t}(\pi_{e\sharp}\nu)^{1/p} \end{displaymath}
by H\"older's inequality. It now follows from Lemma \ref{SobolevLemma} (applied in $e^{\perp} \cong \R^{d - 1}$ with $\sigma = [s - (d - 1)]/2$) that
\begin{align*} \int \pi_{e\sharp}\mu \cdot f \, d\pi_{e\sharp}\nu & \lesssim \sqrt{I_{2(d - 1) - s}(f \, d\pi_{e\sharp}\nu)}\|\pi_{e\sharp}\mu\|_{H^{[s - (d - 1)]/2}}\\
& \lesssim \left(I_{t}(\pi_{e\sharp}\nu) \right)^{1/2p}\left(\int_{e^{\perp}} |\widehat{\pi_{e\sharp}\mu}(\xi)|^{2}|\xi|^{s - (d - 1)} \, d\xi \right)^{1/2}. \end{align*}
Since the function $f \in L^{q}(\pi_{e\sharp}\nu)$ with $\|f\|_{L^{q}(\pi_{e\sharp}\nu)} = 1$ was arbitrary, one may infer by duality that
\begin{displaymath} \|\pi_{e\sharp}\mu\|_{L^{p}(\pi_{e\sharp}\nu)} \lesssim \left(I_{t}(\pi_{e\sharp}\nu)\right)^{1/2p}\left(\int_{e^{\perp}} |\widehat{\pi_{e\sharp}\mu}(\xi)|^{2}|\xi|^{s - (d - 1)} \, d\xi \right)^{1/2}. \end{displaymath}
Now it is time to estimate \eqref{form3}. This uses duality once more, so fix $f \in L^{q}(S^{d - 1})$ with $\|f\|_{L^{q}(S^{d - 1})} = 1$. Then, write
\begin{align*} \int_{S^{d - 1}} & \|\pi_{e\sharp}\mu\|_{L^{p}(\pi_{e\sharp}\nu)} \, \cdot f(e) \, d\calH^{d - 1}(e)\\
& \lesssim \int_{S^{d - 1}} \left(I_{t}(\pi_{e\sharp}\nu)\right)^{1/2p}\left(\int_{e^{\perp}} |\widehat{\pi_{e\sharp}\mu}(\xi)|^{2}|\xi|^{s - (d - 1)} \, d\xi \right)^{1/2} \cdot f(e) \, d\calH^{d - 1}(e)\\
& \lesssim \left( \int_{S^{d - 1}} I_{t}(\pi_{e\sharp}\nu)^{1/p} \cdot f(e)^{2} \, d\calH^{d - 1}(e) \right)^{1/2}\left(\int_{S^{d - 1}} \int_{e^{\perp}} |\widehat{\pi_{e\sharp}\mu}(\xi)|^{2}|\xi|^{s - (d - 1)} \, d\xi \, d\calH^{d - 1}(e) \right)^{1/2}. \end{align*} 
The second factor is bounded by $\lesssim I_{s}(\mu)^{1/2} < \infty$, using (generalised) integration in polar coordinates, see for instance (2.6) in \cite{MO}. To tackle the first factor, say "$I$", write $f^{2} = f \cdot f$ and use H\"older's inequality again:
\begin{displaymath} I \lesssim \left(\int_{S^{d - 1}} I_{t}(\pi_{e\sharp}\nu) \, \cdot f(e)^{p} \, d\calH^{d - 1}(e) \right)^{1/2p} \cdot \|f\|_{L^{q}(S^{d - 1})}^{1/2} \end{displaymath}
The second factor equals $1$. To see that the first factor is also bounded, note that if $B(e,r) \subset S^{d - 1}$ is a ball, then
\begin{displaymath} \int_{B(e,r)} f^{p} \, d\calH^{d - 1} \leq \left(\calH^{d - 1}(B(e,r))\right)^{2 - p} \cdot \left( \int_{S^{d - 1}} f^{q} \, d\calH^{d - 1} \right)^{p - 1} \lesssim r^{(d - 1)(2 - p)}. \end{displaymath}
Thus, $\sigma = f^{p} \, d\calH^{d - 1}$ is a Frostman measure on $S^{d - 1}$ with exponent $(d - 1)(2 - p)$. Now, it is well-known (and first observed by Kaufman \cite{Ka}) that
\begin{displaymath} \int_{S^{d - 1}} I_{t}(\pi_{e\sharp}\nu) \, d\sigma(e) = \iint \int_{S^{d - 1}} \frac{d\sigma(e)}{|\pi_{e}(x) - \pi_{e}(y)|^{t}} \, d\nu(x) \, d\nu(y) \lesssim I_{t}(\nu), \end{displaymath}
as long as $t < (d - 1)(2 - p)$, which is implied by \eqref{assumptions}. Hence $I \lesssim I_{t}(\nu)^{1/2p}$, and finally
\begin{displaymath} \int_{S^{d - 1}} \|\pi_{e\sharp}\mu\|_{L^{p}(\pi_{e\sharp}\nu)} \, \cdot f(e) \, d\calH^{d - 1}(e) \lesssim I_{t}(\nu)^{1/2p}I_{s}(\mu)^{1/2} \end{displaymath}
for all $f \in L^{q}(S^{d - 1})$ with $\|f\|_{L^{q}(S^{d - 1})} = 1$. By duality, it follows that
\begin{displaymath} \eqref{form3} \lesssim I_{t}(\nu)^{1/2p}I_{s}(\mu)^{1/2} < \infty. \end{displaymath}
This proves \eqref{form4}, using \eqref{form3}. The proof of Theorem \ref{mainTechnical} is complete. \end{proof}


\begin{thebibliography}{1234}
\bibitem{BLZ} \textsc{M. Bond, I. \L aba, and J. Zahl}: \emph{Quantitative visibility estimates for unrectifiable sets in the plane}, Trans. Amer. Math. Soc. \textbf{368}(8) (2016), 5475-5513
\bibitem{Bo} \textsc{J. Bourgain}: \emph{On the Erd\H{o}s-Volkmann and Katz-Tao ring conjectures}, Geom. Funct. Anal. \textbf{13}(2) (2003), 334-365
\bibitem{Cs1} \textsc{M. Cs\"ornyei}: \emph{On the visibility of invisible sets}, Ann. Acad. Sci. Fenn. Math. \textbf{25}(2) (2000), 417-421
\bibitem{Cs2} \textsc{M. Cs\"ornyei}: \emph{How to mame Davies' theorem visible}, Bull. London Math. Soc. \textbf{33}(1) (2001), 59-66
\bibitem{Fa} \textsc{K. Falconer}: \emph{Hausdorff dimension and the exceptional set of projections}, Mathematika \textbf{29} (1982), 109--115
\bibitem{KT} \textsc{N. H. Katz and T. Tao}: \emph{Some connections between Falconer's distance set conjecture and sets of Furstenburg type}, New York J. Math. \textbf{7} (2001), 149-187
\bibitem{Ka} \textsc{R. Kaufman}: \emph{On Hausdorff dimension of projections}, Mathematika \textbf{15} (1968), 153--155
\bibitem{Mar} \textsc{J. M. Marstrand}: \emph{Some fundamental geometrical properties of plane sets of fractional dimensions}, Proc. London Math. Soc. \textbf{4} (3) (1954), 257-302
\bibitem{Ma2} \textsc{P. Mattila}: \emph{Integral geometric properties of capacities}, Trans. Amer. Math. Soc. \textbf{226}(2) (1981), 539-554
\bibitem{Mat2} \textsc{P. Mattila}: \emph{Geometry of sets and measures in Euclidean spaces: fractals and rectifiability}, Cambridge studies in advanced mathematics \textbf{44}, Cambridge University Press 1995
\bibitem{Ma} \textsc{P. Mattila}: \emph{Hausdorff dimension, projections, and the Fourier transform}, Publ. Mat. \textbf{48}(1) (2004), 3-48
\bibitem{Mat} \textsc{P. Mattila}: \emph{Fourier analysis and Hausdorff dimension}, Cambridge Studies in Advanced Mathematics \textbf{150}, 2015, Cambridge University Press, Cambridge
\bibitem{MO} \textsc{P. Mattila and T. Orponen}: \emph{Hausdorff dimension, intersections of projections and exceptional plane sections}, Proc. Amer. Math. Soc. \textbf{144}(8) (2016), 3419-3430
\bibitem{O} \textsc{T. Orponen}: \emph{A sharp exceptional set estimate for visibility}, Bull. London Math. Soc. (to appear), arXiv:1602.07629
\bibitem{OS} \textsc{T. Orponen and T. Sahlsten}: \emph{Radial projections of rectifiable sets}, Ann. Acad. Sci. Fenn. Math. \textbf{36}(2) (2011), 677-681
\bibitem{PS} \textsc{Y. Peres and W. Schlag}: \emph{Smoothness of projections, Bernoulli convolutions, and the dimension of exceptions}, Duke Math. J. \textbf{102}(2) (2000), 193-251
\bibitem{SS} \textsc{K. Simon and B. Solomyak}: \emph{Visibility for self-similar sets of dimension one in the plane}, Real Anal. Exchange \textbf{32}(1) (2006/2007), 67-78
\bibitem{U} \textsc{P. Ungar}: \emph{$2N$ noncollinear points determine at least $2N$ directions}, J. Combin. Theory Ser. A \textbf{33}(3) (1982), 343-347
\end{thebibliography}
\end{document}